\title[The 2D Kuramoto-Sivashinsky Equation]{Global solutions of the two-dimensional Kuramoto-Sivashinsky equation with a linearly growing mode in each direction}
\author[David M. Ambrose]{David M. Ambrose$^{*}$}
\thanks{$^{*}$corresponding author}
\address{Department of Mathematics, Drexel University, Philadelphia, PA 19104, USA}
\email{dma68@drexel.edu}
\author[Anna L. Mazzucato]{Anna L. Mazzucato}
\address{Department of Mathematics, Penn State University, University Park, PA 16802, USA}
\email{alm24@psu.edu}
\newtheorem{theorem}{Theorem}
\newtheorem{proposition}{Proposition}
\newtheorem{remark}{Remark}
\begin{document}

\begin{abstract}
We consider the Kuramoto-Sivashinsky equation in two space dimension. We establish the first proof of global
existence of solutions in the presence of a linearly growing mode in both
spatial directions for sufficiently small data.  We develop a new method to this end, categorizing wavenumbers as low (linearly growing modes),
intermediate (linearly decaying modes that serve as energy sinks for the low modes), and high (strongly linearly
decaying modes).  The low and intermediate modes are controlled by means of a Lyapunov function, while the high
modes are controlled with operator estimates in function spaces based on the Wiener algebra.
\end{abstract}

\maketitle

\section{Introduction}

We study the Kuramoto-Sivashinsky equation on a rectangular domain  $[0,L_{1}]\times[0,L_{2}]$ under periodic boundary conditions:
\begin{equation}\label{KS}
\psi_{t}=-\Delta^{2}\psi-\Delta\psi-|\nabla\psi|^{2}.
\end{equation}
The Kuramoto-Sivashinsky equation is a well-known model of flame front propagation and was first
derived in \cite{kuramoto}, \cite{sivashinsky}.
We will prove a global existence theorem for solutions with sufficiently small data in a suitable function space in the presence of one linearly growing mode in each direction.  There are a number
of global existence theorems for the Kuramoto-Sivashinsky equation in one spatial dimension 
\cite{goodman}, \cite{NST}, \cite{tadmor}, and detailed studies of the asymptotics of these solutions
\cite{bronski}, \cite{otto1}, \cite{otto2}, \cite{otto3} (see also \cite{KTZ18} for the effect of adding dispersion).  These one-dimensional results rely on a particular structure of the nonlinearity that is not present in two spatial dimensions, and thus there are far fewer global results available in the  two-dimensional case.

In the spatially periodic case, the dynamics of the Kuramoto-Sivashinsky equation are in part governed by the size of
the domain, as this determines how many linearly growing Fourier modes are present.
In two spatial dimensions, most
global existence results in the literature are inherently anisotropic, that is, the length of one period is small compared to that of the other period and/or the size of the initial data.
In thin domains, solutions are shown to remain close to one-dimensional solutions.  Such
studies were initiated by Sell and Taboada \cite{sell}.  Other anisotropic global existence theorems are
the works \cite{igor}, \cite{kukavicaMassatt}, \cite{molinetThin}.  
Then there are global existence and singularity formation results for modified equations.
The fourth-order nature of the parabolic evolution \eqref{KS}  implies the absence
of a maximum principle for the linearized evolution.  Some authors have shown that related systems with maximum principles
do have global solutions \cite{larios}, \cite{molinet-burgers}, while others have  modified the nonlinear term,
showing that related equations have finite-time singularities \cite{titi} or global solutions \cite{campos}, 
\cite{pinto}; see also \cite{papageorgiou2} for a numerical study of a modified equation. 
The second author and Feng have shown that a modification of \eqref{KS} with additional advection also
has global solutions \cite{mazzucatoFeng}.
Rather than modifying the equation or relying on anisotropy, the authors have previously given a global
existence theorem for the two-dimensional Kuramoto-Sivashinsky equation, but under the requirement
that the domain size be sufficiently small, a requirement that precludes growing modes \cite{ambroseMazzucato}.  Nonlinear stability of the zero solution  and decay rates in the long time limit  for a generalized Kuramoto-Sivashinsky equation with damping were obtained in \cite{ZhaoTang00} under conditions on the coefficients that ensure stability for the linearized operator  and for small data in $L^2$.

The linear operators $-\Delta^{2}$ and $-\Delta$ on the right-hand side of \eqref{KS} may be viewed as being
in competition with each other; these represent a higher-order forward parabolic effect and a lower-order
backward parabolic effect, which gives rise to large-scale instabilities.  Since $-\Delta^2-\Delta$ is an elliptic operator, there are at most finitely many linearly growing Fourier modes forward in time.
To be precise, if $L_{1}$ and $L_{2}$ are each in the interval $(0,2\pi),$
then there are no linearly growing Fourier modes in \eqref{KS}, and this is the case studied in 
\cite{ambroseMazzucato}.  In the current study,  
by taking each of $L_{1}$ and $L_{2}$ slightly larger than $2\pi,$  we ensure that there is 
exactly one linearly growing
mode in each of the $x$-direction and the $y$-direction. 
In all previous global existence results for the two-dimensional Kuramoto-Sivashinsky equation,
either there were no linearly growing modes at all \cite{ambroseMazzucato}, 
or (in the strongly anisotropic works \cite{igor}, \cite{molinetThin}, \cite{sell}) 
the linearly growing modes were only in one direction.
The current work is therefore the first global existence theorem for the two-dimensional Kuramoto-Sivashinsky 
equation to allow a growing mode in each spatial direction.
We note that the 
interested reader might see \cite{papageorgiou} for a detailed numerical study of the dependence of the 
dynamics of solutions on the size of the spatial domain/the number of linearly growing Fourier modes present.
Numerically, in one space dimension one observes that the $L^2$-norm of $\nabla\psi$ remains bounded in time even when growing modes are present. Hence, the non-linearity has a restoring effect on the large-scale unstable modes. This mechanism was rigorously investigated in \cite{NST85} in two cases: in one dimension for even solutions; in dimension 2 and 3, under the assumption of a global bound on the $H^1$ norm of the solution, which implies global existence. The authors of
\cite{NST85} rely on Lyapunov function techniques, which are also at the core of our proof,  and are able to estimate the number of determining modes and the size of the attractor in terms of the assumed $H^1$ bound and in terms of the period.

The method of proof of our main theorem
primarily combines ideas from prior work of the authors \cite{ambroseMazzucato}
and from the one-dimensional global existence theorem of Goodman \cite{goodman}.  
We will now describe
the formulation of the problem to be used and how these ideas come into play.
  
We immediately notice that, while the mean of $\psi$, $\bar{\psi},$ is not preserved under the time evolution, its growth is governed by the $L^{2}$-norm of the gradient of $\psi$, which does not depend on the mean itself.
As a matter of fact, if we define $\phi=\mathbb{P}_{0}\psi,$
where $\mathbb{P}_{0}$ is the projection which removes the mean of a periodic function,  the equation
satisfied by $\phi$ is
\begin{equation}\label{eq:phiEq}
\phi_{t}=-\Delta^{2}\phi-\Delta\phi-\mathbb{P}_{0}|\nabla\phi|^{2}.
\end{equation}
The evolution equation for $\bar{\psi}$ is then
\begin{equation}\label{meanEquation}
\bar{\psi}_{t}=-\frac{1}{L_{1}L_{2}}\int_{0}^{L_{1}}\int_{0}^{L_{2}}|\nabla\phi|^{2}\ dxdy.
\end{equation}
We therefore see that the mean of $\psi$ exists and is finite at time $T$ as long as $\phi\in L^{1}([0,T];\dot{H}^{1})$, where $\dot{H}^1$ denotes the homogeneous $L^2$-Sobolev space of order 1. We  concentrate on solving \eqref{eq:phiEq} from now on.

As in Nicolaenko, Scheurer, and Temam \cite{NST}, we consider symmetric solutions:
\begin{equation}\label{evenSymmetry}
\phi(x,y,t)=\sum_{k,j\geq 1}a_{k,j}(t)\cos\left(\frac{2\pi kx}{L_{1}}\right)\cos\left(\frac{2\pi jy}{L_{2}}\right).
\end{equation}
We introduce a decomposition of the Fourier modes into three categories.  With our choice that $L_{1}$ and
$L_{2}$ are each slightly larger than $2\pi,$ we have exactly two linearly growing Fourier modes, $a_{1,0}$ and $a_{0,1};$
these linearly growing modes are the first type that we treat specially.  We next take two intermediate modes,
which are the $a_{2,0}$ and $a_{0,2}$ modes; these are linearly decaying modes that we use to absorb
energy from the lowest modes.  Finally, our third category consists of all remaining Fourier modes; we consider
these to be strongly decaying.

We let $\mathbb{P}_{5}$ be the projection onto the complement of the span of the  4 modes introduced above.  We may then write $\phi$ as
\begin{multline}\label{phiDefn}
\phi(x,y,t)=a_{1,0}(t)\cos\left(\frac{2\pi x}{L_{1}}\right)+a_{2,0}(t)\cos\left(\frac{4\pi x}{L_{1}}\right)
\\
+a_{0,1}(t)\cos\left(\frac{2\pi y}{L_{2}}\right)+a_{0,2}(t)\cos\left(\frac{4\pi y}{L_{2}}\right)
+w(x,y,t),
\end{multline}
where $w=\mathbb{P}_{5}\phi.$  The KSE is equivalent, at least formally, to a coupled system of 5 equations, 4 ODEs for the modes
$a_{1,0},$ $a_{2,0},$ $a_{0,1},$ $a_{0,2},$ and a PDE for $w.$  Our first goal is to derive this coupled system. Throughout, for ease of notation, we will denote derivatives as subscripts, so $a_{1,0 t}=\frac{d}{dt} a_{1,0}$.

We introduce some notation for the coefficients of the linear terms in the modes that we treat specially:
\begin{align}\label{eq:epBdef}
\varepsilon_{i}&=-\left(\frac{2\pi}{L_{i}}\right)^{4}+\left(\frac{2\pi}{L_{i}}\right)^{2},\\
B_{i}&=\left(\frac{4\pi}{L_{i}}\right)^{4}-\left(\frac{4\pi}{L_{i}}\right)^{2}, \nonumber
\end{align}
where $i\in\{1,2\}.$
We will sometimes denote $\varepsilon=\max\{\varepsilon_{1},\varepsilon_{2}\}$ as well.
Of course, since $2\pi<L_{i}<4\pi$,  we have that all of these coefficients are positive.
Furthermore, by taking $L_i$ only slightly larger than $2\pi$ we can make $\varepsilon_i$ arbitrarily small.
Using this notation, our equations for the first two modes in the $x$-direction become
\begin{equation}\nonumber
a_{1,0t}=\varepsilon_{1}a_{1,0}+\frac{8\pi^{2}}{L_{1}^{2}}a_{1,0}a_{2,0}+F_{1,0,x}+F_{1,0,y},
\end{equation}
\begin{equation}\nonumber
a_{2,0t}=-B_{1}a_{2,0}-\frac{2\pi^{2}}{L_{1}^{2}}a_{1,0}^{2}+F_{2,0,x}+F_{2,0,y}.
\end{equation}
Here, we have brought out the quadratic interactions between these two modes and we consider the rest
of the nonlinearity to be a smaller remainder.  We will give formulas for the forcing functions $F_{i,0,x}$ and $F_{i,0,y}$, $i=1,2$, in Section \ref{fSection} below.
Similarly, the equations for the first two modes in the $y$-direction are
\begin{equation}\nonumber
a_{0,1t}=\varepsilon_{2}a_{0,1}+\frac{8\pi^{2}}{L_{2}^{2}}a_{0,1}a_{0,2}+F_{0,1,x}+F_{0,1,y},
\end{equation}
\begin{equation}\nonumber
a_{0,2t}=-B_{2}a_{0,2}-\frac{2\pi^{2}}{L_{2}^{2}}a_{0,1}^{2}+F_{0,2,x}+F_{0,2,y}.
\end{equation}
Finally, we may write the evolution equation for $w$ simply as
\begin{equation}\label{wEqn}
w_{t}=-\Delta^{2}w-\Delta w+\mathbb{P}_{5}\left((\partial_{x}\phi)^{2}+(\partial_{y}\phi)^{2}\right),
\end{equation}
where $\phi$ and $w$ are related through \eqref{phiDefn}.

As we have indicated already, the four special modes 
will be treated with a Lyapunov function, generalizing Goodman's result for a toy model \cite{goodman}.
In Goodman's case, energy was
conserved by the nonlinear terms. The conservation of energy (i.e., a conserved $L^{2}$ norm) does not hold
in dimension greater than one, due to the form of the non-linearity.  We observe, however, that Goodman's argument is more robust than
this, and can be modified to handle the presence of small forcing.
This Lyapunov function argument will show that the first $4$ modes remains of size $\varepsilon^{1/2}$, if initially of that size. This result is inherently a non-linear effect, since the first two modes are linearly growing in fact.
For the 2-modes  $a_{2,0}$ and $a_{0,2}$, this bound can be
improved.  We will find that the size of each of $a_{2,0}$ and $a_{0,2}$ is then at most proportional to $\varepsilon$, if initially of that size.
The norm of $w$ (in a function space related to the Wiener
algebra) will be shown to be bounded by $\varepsilon^{3/2}$, if initially of that size.  The method of employing functions spaces based on the Wiener algebra was used
previously by the authors in \cite{ambroseMazzucato}, and is inspired by the work of Duchon and Robert
on vortex sheets \cite{duchonRobert} (cf. also \cite{BJMT14}).  The first author and his collaborators have additionally developed and used
the technique in \cite{ambroseMFG1}, \cite{ambroseMFG2}, \cite{ambroseEpitaxial}, \cite{ambroseBonaMilgrom},
\cite{milgromAmbrose}.

The following is the (non-technical version of) our main theorem:
\begin{theorem}\label{nontechnicalTheorem}
There exists $\varepsilon_{*}>0$ such that for any $\varepsilon\in(0,\varepsilon_{*}),$ if
\begin{equation}\nonumber
a_{1,0}(0)\sim\varepsilon^{1/2},\qquad a_{2,0}(0)\sim\varepsilon,
\end{equation}
\begin{equation}\nonumber
a_{0,1}(0)\sim\varepsilon^{1/2},\qquad a_{0,2}(0)\sim\varepsilon,\qquad w_{0}\sim \varepsilon^{3/2},
\end{equation}
then the 2D Kuramoto-Sivashinsky equation with these data has a solution on an arbitrary time interval $[0,T]$.
\end{theorem}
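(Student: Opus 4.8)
The plan is to obtain the theorem as the end product of a continuation (bootstrap) argument built on a priori estimates at the four stated scales. Local existence for the coupled system---the four ODEs for $a_{1,0},a_{2,0},a_{0,1},a_{0,2}$ together with the semilinear parabolic equation \eqref{wEqn} for $w$---is standard once $w$ is placed in a space built on the Wiener algebra of absolutely summable Fourier coefficients, since the nonlinearity $\mathbb{P}_5(|\nabla\phi|^2)$ is a finite sum of products and the Wiener norm is a Banach-algebra norm. The substance of the argument is to show that the open set
\[
|a_{1,0}|,\,|a_{0,1}|\le C_1\varepsilon^{1/2},\qquad |a_{2,0}|,\,|a_{0,2}|\le C_2\varepsilon,\qquad \|w\|\le C_3\varepsilon^{3/2}
\]
is maintained uniformly in time, so that the local solution never exits it and therefore extends to arbitrary $[0,T]$. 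The two families of bounds are proved by independent mechanisms coupled only through terms that are small in $\varepsilon$: the four modes are controlled by a Lyapunov function (treating the $w$-- and cross-direction coupling as forcing), while $w$ is controlled by parabolic smoothing in the Wiener norm (treating the modes as given coefficients of size $\varepsilon^{1/2}$).

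For the four modes I would first exploit that the explicit quadratic interactions decouple the $x$-- and $y$--directions. Writing $\alpha_i=2\pi^2/L_i^2$ and $\beta_i=8\pi^2/L_i^2$, so that $\beta_i=4\alpha_i$, the pair $(a_{1,0},a_{2,0})$ obeys, up to forcing, $a_{1,0t}=\varepsilon_1 a_{1,0}+\beta_1 a_{1,0}a_{2,0}$ and $a_{2,0t}=-B_1 a_{2,0}-\alpha_1 a_{1,0}^2$. The ratio $\beta_1=4\alpha_1$ is exactly what makes the weighted energy $\tfrac12 a_{1,0}^2+2a_{2,0}^2$ have a vanishing cubic contribution, leaving $\tfrac{d}{dt}(\tfrac12 a_{1,0}^2+2a_{2,0}^2)=\varepsilon_1 a_{1,0}^2-4B_1 a_{2,0}^2$ plus forcing. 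Because the destabilizing term $\varepsilon_1 a_{1,0}^2$ is not dominated by this alone, I would build the genuine Lyapunov function from the nonlinear restoring mechanism: eliminating $a_{2,0}$ by its near-equilibrium value $a_{2,0}\approx-(\alpha_1/B_1)a_{1,0}^2$ yields the scalar cubic $a_{1,0t}\approx\varepsilon_1 a_{1,0}-(\alpha_1\beta_1/B_1)a_{1,0}^3$, whose stable equilibria sit at $a_{1,0}^2\sim\varepsilon_1 B_1/(\alpha_1\beta_1)=O(\varepsilon)$, precisely the advertised scale. Accordingly I take $\Phi$ to be the associated double-well potential $-\tfrac{\varepsilon_1}{2}a_{1,0}^2+\tfrac{\alpha_1\beta_1}{4B_1}a_{1,0}^4$ augmented by a quadratic penalty $\tfrac{K}{2}\bigl(a_{2,0}+\tfrac{\alpha_1}{B_1}a_{1,0}^2\bigr)^2$ measuring the deviation from the slow manifold, and similarly in the $y$--direction; summing gives a Lyapunov function for all four modes. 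A computation then shows $\dot\Phi\le0$ up to the forcing, and since the quartic makes sublevel sets bounded, the relevant sublevel set confines $a_{1,0},a_{0,1}$ to size $\varepsilon^{1/2}$ and, through the penalty, $a_{2,0},a_{0,2}$ to size $\varepsilon$, uniformly in time.

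For $w$ I would estimate \eqref{wEqn} by Duhamel's formula in the Wiener norm. On the range of $\mathbb{P}_5$ the Fourier symbol of $-\Delta^2-\Delta$ is bounded above by a negative constant with a spectral gap, so the linear semigroup decays exponentially; combined with the algebra property of the Wiener norm this controls $\mathbb{P}_5(|\nabla\phi|^2)$. The dominant forcing of $w$ comes from products of the special modes that land in high wavenumbers (such as the $\sin(2\pi x/L_1)\sin(4\pi x/L_1)$ interaction, of size $a_{1,0}a_{2,0}=O(\varepsilon^{3/2})$); products involving $w$ itself are of size $\varepsilon^{1/2}\|w\|$ or $\|w\|^2$ and are higher order. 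Balancing the $O(\varepsilon^{3/2})$ forcing against the $O(1)$ biharmonic damping yields $\|w\|=O(\varepsilon^{3/2})$, closing that bound.

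The loop closes because the couplings are genuinely subordinate: the forcing $F_{i,0,\cdot}$ fed into the mode equations by $w$ is of size $\varepsilon^{1/2}\|w\|=O(\varepsilon^{2})$, small compared with the $O(\varepsilon)$ restoring rate in the Lyapunov estimate, so $\Phi$ stays within a bounded sublevel set despite the forcing; this is the sense in which Goodman's argument tolerates a small perturbation. Choosing $\varepsilon_*$ small makes each recovered constant strictly smaller than the assumed one, so the region above is invariant and the solution is global; finiteness of $\int_0^T\|\nabla\phi\|_{L^2}^2$ then recovers $\bar\psi$ through \eqref{meanEquation} and hence $\psi$. I expect the Lyapunov step to be the main obstacle: taming the two linearly growing modes uniformly in time forces one to encode the slow-manifold restoring mechanism in $\Phi$ rather than use a bare energy, and verifying $\dot\Phi\le0$ with all the cubic cross terms, the $x$--$y$ coupling, and the nonzero forcing simultaneously present---uniformly for all $t$---is the delicate computation on which the whole argument rests.
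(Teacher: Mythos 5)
Your proposal is correct in outline and reaches the theorem by the same three-tier strategy as the paper (growing and sink modes controlled by a Lyapunov function with the $w$-coupling demoted to $O(\varepsilon^2)$ forcing; $w$ controlled by Duhamel estimates in a Wiener-algebra norm; smallness in $\varepsilon$ closing the loop), but it differs genuinely at the two load-bearing points, and both differences are legitimate. First, the global architecture: the paper never runs a continuation argument on the actual solution; it builds an iteration in which the forcings $F^{n}$ lag one step behind $(a^{n+1},w^{n+1})$, proves uniform-in-$n$ bounds by induction (Proposition \ref{uniformEstimate}), and passes to the limit by compactness (Montel's theorem), thereby avoiding the local-existence-plus-continuation machinery your bootstrap presupposes. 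Second, the Lyapunov function itself: writing, as you do, $\alpha_i=2\pi^2/L_i^2$ and $\beta_i=8\pi^2/L_i^2$, the paper uses Goodman's quadratic-plus-linear function $G=\frac12 a^2+2b^2+\frac{L_i^2\varepsilon}{\pi^2}b$, in which the weights $\frac12,2$ effect exactly the cubic cancellation you noted (from $\beta_i=4\alpha_i$) and the small linear term converts the $-\alpha_i a^2$ in the $b$-equation into the damping $-2\varepsilon a^2$ that dominates $\varepsilon_i a^2$; this yields only $a^2+b^2\lesssim\varepsilon$ (Proposition \ref{lyapunovArgument}), so the refinement $|b|\lesssim\varepsilon$ requires a separate Duhamel step (Proposition \ref{enhancedBBound}). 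Your double-well/slow-manifold function is genuinely different and does check out: with $c=b+\frac{\alpha_1}{B_1}a^2$ and $P=\varepsilon_1 a-\frac{\alpha_1\beta_1}{B_1}a^3$ one computes $\dot\Phi=-P^2-KB_1c^2+(\frac{2K\alpha_1}{B_1}-\beta_1)Pac+\frac{2K\alpha_1\beta_1}{B_1}a^2c^2+(\hbox{forcing})$, and the cross term cancels exactly for $K=\beta_1 B_1/(2\alpha_1)$ or is absorbed by Young once $a^2\lesssim\varepsilon$. What your function buys: it delivers the $\varepsilon^{1/2}$ bound on $a$ and the $\varepsilon$ bound on $b$ in a single stroke (the penalty term forces $|c|\lesssim\varepsilon$ on the sublevel set), and it does not even use the ratio $\beta_i=4\alpha_i$, so it is arguably more robust. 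What it costs: the damping $-P^2-KB_1c^2$ degenerates at the nontrivial equilibria $a^2=\varepsilon_1B_1/(\alpha_1\beta_1)$, $c=0$, so the invariance argument must verify that these sit strictly inside the sublevel set $\{\Phi\le c_0\}$, $c_0\sim\varepsilon^2$ (they do, since $\Phi=-\varepsilon_1^2B_1/(4\alpha_1\beta_1)<0$ there), and that on the boundary the damping is bounded below by $O(\varepsilon^3)$ while the forcing contributes $O(\varepsilon^{7/2})$ plus a $KcQ_2$ term of size $O(\varepsilon^3)$ that must itself be absorbed by Young into $-KB_1c^2$ — precisely the delicate computation you flagged. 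One point your sketch leaves implicit but which is indispensable and should be stated: the $O(\varepsilon^{3/2})$ forcing level for $w$ holds only because $\mathbb{P}_5$ annihilates the $O(\varepsilon)$ self-interaction $a_{1,0}^2\sin^2(2\pi x/L_1)$, which lives entirely on the $(0,0)$ and $(2,0)$ modes; the paper isolates this as the identity $\mathbb{P}_5\Phi_0=0$ in the proof of Proposition \ref{uniformEstimate}, and without it the entire scaling hierarchy collapses.
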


We will give more precise bounds on the initial data and will state a technical version of 
the theorem later in Theorem \ref{mainTheoremTechnical} in Section \ref{inductionSection}. 

While we have not carried out the proof of our main theorem in the absence of the even symmetry reflected
in \eqref{evenSymmetry}, we expect that this symmetry is not critical for achieving the result.  Our proof relies on two main 
ingredients, neither of which require this symmetry; the proof, however, would certainly be more complicated in the general
case.  One of our main ingredients is operator estimates in function spaces related to the Wiener algebra, and for these
estimates, the symmetry is completely immaterial.  The other main ingredient is Goodman's Lyapunov function argument.
Goodman actually introduced two Lyapunov functions, one for a toy model and one for the full 
one-dimensional Kuramoto-Sivashinsky
equation in the absence of symmetry.  
In the present work we generalize the Goodman result on the toy model, so using the simpler of the two
Lyapunov functions.  In the general case we expect that using a different Lyapunov function inspired by Goodman's 
argument would provide the same result as we prove here.

We focus on the two-dimensional case, 
which is the most physically motivated case as compared to higher dimensions.
Indeed, \eqref{KS} may be obtained from a coordinate-free 
model for the evolution of a flame front, which is modeled as a parametric surface \cite{frankelSivashinsky1987},
\cite{frankelSivashinsky1988}. 
It would nevertheless be very interesting 
to investigate whether our 
global existence result extends to higher dimensions. The Duchon-Robert argument to control the remainder is based on the 
Wiener algebra and hence it does not rely on dimension-dependent embeddings. The key step in such an extension is the 
analysis of the reduced system for the first modes in each direction, which should still only contain quadratic interactions up 
the the remainders.  We do expect the argument to carry over to higher dimensions.

The plan for the rest of the paper is as follows.  In Section \ref{fSection}, we complete the description of the evolution equations satisfied by the components in \eqref{phiDefn} by detailing formulas for the forcing functions.  In Section \ref{iterationSection},
we set up an iterative scheme, and we prepare to make estimates, which will be uniform in the iteration parameter.
We develop propositions that give these uniform estimates on $a_{1,0},$ $a_{2,0},$ $a_{0,1},$ and $a_{0,2}$
in Section \ref{goodmanSection}.  We then develop tools that will give uniform bounds on $w$ in Section
\ref{duchonRobertSection}.  The uniform bounds are established, and the limit of the iterates is taken, in
Section \ref{inductionSection}.  We then make some concluding remarks on future directions in Section 
\ref{conclusion}.

\subsection*{Acknowledgments}
The first author is grateful to the National Science Foundation for support through grant DMS-1907684.
The second author is grateful to the National Science Foundation for support through grant DMS-1909103.

\subsection{Formulas for the forcing functions}\label{fSection}

We let $\mathbb{P}_{1,0}$ be the projection onto the $(1,0)$ Fourier mode and we let $\mathbb{P}_{2,0}$ be the 
projection onto the $(2,0)$ Fourier mode.
Similarly, we let $\mathbb{P}_{0,1}$ be the projection onto the $(0,1)$ Fourier mode and we let $\mathbb{P}_{0,2}$ be the projection onto the $(0,2)$ Fourier mode.

We will determine  $F_{1,0,x}$ and $F_{1,0,y}$ by projecting the nonlinear term onto the $(1,0)$ mode and then separating out certain quadratic interactions:
\begin{align}\label{startFCalculation}
 &\mathbb{P}_{1,0}\left(\left(\phi_{x}\right)^{2}\right)=\left[-\frac{8\pi^{2}}{L_{1}^{2}}a_{1,0}a_{2,0}+F_{1,0,x}\right]
\cos\left(\frac{2\pi x}{L_{1}}\right),\\
&\mathbb{P}_{1,0}\left(\left(\phi_{y}\right)^{2}\right)=F_{1,0,y}\cos\left(\frac{2\pi x}{L_{1}}\right). \nonumber
\end{align}

We decompose $(\phi_{x})^{2}$ as follows:
\begin{equation}\nonumber
(\phi_{x})^{2}=\sum_{i=1}^{6}\Psi_{i}^{x},
\end{equation}
with the terms $\Psi_{i}^{x}$ defined as
\begin{equation}\nonumber
\Psi_{1}^{x}=\frac{4\pi^{2}(a_{1,0})^{2}}{L_{1}^{2}}\sin^{2}\left(\frac{2\pi x}{L_{1}}\right),
\end{equation}
\begin{equation}\nonumber
\Psi_{2}^{x}=\frac{16\pi^{2}(a_{2,0})^{2}}{L_{1}^{2}}\sin^{2}\left(\frac{4\pi x}{L_{1}}\right),
\end{equation}
\begin{equation}\nonumber
\Psi_{3}^{x}=(w_{x})^{2},
\end{equation}
\begin{equation}\nonumber
\Psi_{4}^{x}=\frac{16\pi^{2}a_{1,0}a_{2,0}}{L_{1}^{2}}\sin\left(\frac{2\pi x}{L_{1}}\right)\sin\left(\frac{4\pi x}{L_{1}}\right),
\end{equation}
\begin{equation}\nonumber
\Psi_{5}^{x}=-\frac{4\pi a_{1,0}}{L_{1}}w_{x}\sin\left(\frac{2\pi x}{L_{1}}\right),
\end{equation}
\begin{equation}\nonumber
\Psi_{6}^{x}=-\frac{8\pi a_{2,0}}{L_{1}}w_{x}\sin\left(\frac{4\pi x}{L_{1}}\right).
\end{equation}

The following equation then defines $F_{1,0,x},$,
after making elementary calculations using trigonometric identities: 
\begin{equation}\nonumber
F_{1,0,x}\cos\left(\frac{2\pi x}{L_{1}}\right)
=\mathbb{P}_{1,0}\left[\Psi_{3}^{x}+\Psi_{5}^{x}+\Psi_{6}^{x}\right].
\end{equation}

To compute $F_{1,0,y},$ we need the corresponding decomposition of $\phi_{y}^{2}:$
\begin{equation}\nonumber
\phi_{y}^{2}=\sum_{i=1}^{6}\Psi_{i}^{y},
\end{equation}
with the terms $\Psi_{i}^{y}$ defined as
\begin{equation}\nonumber
\Psi_{1}^{y}=\frac{4\pi^{2}}{L_{2}^{2}}a_{0,1}^{2}\sin^{2}\left(\frac{2\pi y}{L_{2}}\right),
\end{equation}
\begin{equation}\nonumber
\Psi_{2}^{y}=\frac{16\pi^{2}}{L_{2}^{2}}a_{0,2}^{2}\sin^{2}\left(\frac{4\pi y}{L_{2}}\right),
\end{equation}
\begin{equation}\nonumber
\Psi_{3}^{y}=w_{y}^{2},
\end{equation}
\begin{equation}\nonumber
\Psi_{4}^{y}=\frac{16\pi^{2}}{L_{2}^{2}}a_{0,1}a_{0,2}\sin\left(\frac{2\pi y}{L_{2}}\right)\sin\left(\frac{4\pi y}{L_{2}}\right),
\end{equation}
\begin{equation}\nonumber
\Psi_{5}^{y}=-\frac{4\pi a_{0,1}}{L_{2}}w_{y}\sin\left(\frac{2\pi y}{L_{2}}\right),
\end{equation}
\begin{equation}\nonumber
\Psi_{6}^{y}=-\frac{8\pi a_{0,2}}{L_{2}}w_{y}\sin\left(\frac{4\pi y}{L_{2}}\right).
\end{equation}

Again using some trigonometric identities and making other calculations, the equation for $F_{1,0,y}$ is then
\begin{equation}\nonumber
F_{1,0,y}\cos\left(\frac{2\pi x}{L_{1}}\right)=\mathbb{P}_{1,0}[\Psi_{3}^{y}+\Psi_{5}^{y}+\Psi_{6}^{y}].
\end{equation}

We now calculate the remaining forcing functions in a similar way, in particular only the third, fifth, and sixth components of $\Psi$ enter
into the formulas:
\begin{equation}\nonumber
F_{2,0,x}\cos\left(\frac{4\pi x}{L_{1}}\right)=\mathbb{P}_{2,0}\left[\Psi_{3}^{x}+\Psi_{5}^{x}+\Psi_{6}^{x}\right].
\end{equation}
\begin{equation}\nonumber
F_{2,0,y}\cos\left(\frac{4\pi x}{L_{1}}\right)=\mathbb{P}_{2,0}[\Psi_{3}^{y}+\Psi_{5}^{y}+\Psi_{6}^{y}].
\end{equation}
\begin{equation}\nonumber
F_{0,1,x}\cos\left(\frac{2\pi y}{L_{2}}\right)=\mathbb{P}_{0,1}[\Psi_{3}^{x}+\Psi_{5}^{x}+\Psi_{6}^{x}],
\end{equation}
\begin{equation}\nonumber
F_{0,1,y}\cos\left(\frac{2\pi y}{L_{2}}\right)=\mathbb{P}_{0,1}[\Psi_{3}^{y}+\Psi_{5}^{y}+\Psi_{6}^{y}],
\end{equation}
\begin{equation}\nonumber
F_{0,2,x}\cos\left(\frac{4\pi y}{L_{2}}\right)=\mathbb{P}_{0,2}[\Psi_{3}^{x}+\Psi_{5}^{x}+\Psi_{6}^{x}],
\end{equation}
\begin{equation}\nonumber
F_{0,2,y}\cos\left(\frac{4\pi y}{L_{2}}\right)=\mathbb{P}_{0,2}[\Psi_{3}^{y}+\Psi_{5}^{y}+\Psi_{6}^{y}].
\end{equation}

\section{Iterative scheme}\label{iterationSection}

We will solve the coupled system of ODEs for the $4$ specialized modes and the PDE for the remainder $w$ via an iterative scheme for $\phi^n$, where $\phi^n$ is defined by:
\begin{multline}\label{iteratedPhi}
\phi^{n}(x,y,t)=a_{1,0}^{n}(t)\cos\left(\frac{2\pi x}{L_{1}}\right)+a_{2,0}^{n}(t)\cos\left(\frac{4\pi x}{L_{1}}\right)
\\
+a_{0,1}^{n}(t)\cos\left(\frac{2\pi y}{L_{2}}\right)+a_{0,2}^{n}(t)\cos\left(\frac{4\pi y}{L_{2}}\right)
+w^{n}(x,y,t).
\end{multline}
In the scheme, the forcing terms are given by formulas corresponding to those in Section \ref{fSection} in a straightforward way.

We start by giving the equations for the $a^{n+1}$ coefficients:
\begin{equation}\nonumber
a_{1,0t}^{n+1}=\varepsilon_{1}a_{1,0}^{n+1}+\frac{8\pi^{2}}{L_{1}^{2}}a_{1,0}^{n+1}a_{2,0}^{n+1}
+F_{1,0,x}^{n}+F_{1,0,y}^{n},
\end{equation}
\begin{equation}\nonumber
a_{2,0t}^{n+1}=-B_{1}a_{2,0}^{n+1}-\frac{2\pi^{2}}{L_{1}^{2}}(a_{1,0}^{n+1})^{2}+F_{2,0,x}^{n}+F_{2,0,y}^{n},
\end{equation}
\begin{equation}\nonumber
a_{0,1t}^{n+1}=\varepsilon_{2}a_{0,1}^{n+1}+\frac{8\pi^{2}}{L_{2}^{2}}a_{0,1}^{n+1}a_{0,2}^{n+1}
+F_{0,1,x}^{n}+F_{0,1,y}^{n},
\end{equation}
\begin{equation}\nonumber
a_{0,2t}^{n+1}=-B_{2}a_{0,2}^{n+1}-\frac{2\pi^{2}}{L_{2}^{2}}(a_{0,1}^{n+1})^{2}+F_{0,2,x}^{n}+F_{0,2,y}^{n},
\end{equation}
To complete the scheme, we also give the iterated version of \eqref{wEqn} for $w^n$:
\begin{equation}\label{iteratedW}
w^{n+1}_{t}=-\Delta^{2}w^{n+1}-\Delta w^{n+1}+\mathbb{P}_{5}\left((\partial_{x}\phi^{n})^{2}
+(\partial_{y}\phi^{n})^{2}\right).
\end{equation}

The iterated system is taken with initial data that do not depend on $n,$ namely,
\begin{equation}\nonumber
a_{1,0}^{n+1}(t)=a_{1,0}(0),
\qquad
a_{2,0}^{n+1}(t)=a_{2,0}(0),
\end{equation}
\begin{equation}\nonumber
a_{0,1}^{n+1}(t)=a_{0,1}(0),
\qquad
a_{0,2}^{n+1}(t)=a_{0,2}(0),
\qquad
w^{n+1}=w_{0}.
\end{equation}

\subsection{List of constants}
For convenience, we label some combinations of constants that will appear in ensuing calculations.  We first introduce $M_{1,1}$
and $M_{1,2},$ which will be used in the
bounds for $a_{1,0}^{n}$ and $a_{0,1}^{n}:$
\begin{equation}\nonumber
M_{1,1}=\frac{12B_{1}L_{1}^{4}}{\pi^{4}},\qquad M_{1,2}=\frac{12B_{2}L_{2}^{4}}{\pi^{4}}.
\end{equation}
The following constants will be used in the bounds for $a_{2,0}^{n}$ and $a_{0,2}^{n}:$
\begin{equation}\nonumber
M_{2,1}=\frac{8\pi^{2}M_{1,1}}{L_{1}^{2}}.\qquad M_{2,2}=\frac{8\pi^{2}M_{1,2}}{L_{2}^{2}}.
\end{equation}
The constant $M_3$ will be used in the  bound for $w^{n}:$
\begin{equation}\nonumber
M_{3}=\max\left\{6K_{1}\left(2M_{1,1}^{1/2}M_{2,1}K_{2}\right), 6K_{1}\left(2M_{1,2}^{1/2}M_{2,2}K_{2}\right)\right\}.
\end{equation}

The formula above for $M_{3}$ involves two other constants, $K_{1}$ and $K_{2}.$  Of these, $K_{1}$ is a bound
for the operator norm of an integral term in the mild formulation of the equation for $w^{n}$; this formulation will be developed in Section \ref{duchonRobertSection} below.
To specify the constant $K_{1}$ we need to specify a set, $A,$ of special wavenumber pairs:
\begin{equation}\nonumber
A=\{(0,0), (1,0), (2,0), (0,1), (0,2)\}.
\end{equation}
Then $K_{1}$ is given by 
\begin{equation}\label{definitionOfK1}
K_{1}=\sup_{(k,j)\in\mathbb{Z}^{2}\setminus A}\frac{1+|k|+|j|}{-\sigma(k,j)},
\end{equation}
where $\sigma$ is the symbol of the linearized KSE operator $-\Delta^2-\Delta$,
\begin{equation}\label{eq:sigmaDef}
\sigma(k,j)=-\left(\left(\frac{2\pi k}{L_{1}}\right)^{2}+\left(\frac{2\pi j}{L_{2}}\right)^{2}\right)^{2}
+\left(\frac{2\pi k}{L_{1}}\right)^{2}+\left(\frac{2\pi j}{L_{2}}\right)^{2}.
\end{equation}
We notice that the denominator in \eqref{definitionOfK1}
is quartic with respect to $k$ and $j$, while the numerator is linear.  Also, the denominator
is always positive, as the only pairs for which the denominator is nonpositive are $(k,j)=(0,0),$ $(k,j)=(1,0),$
and $(k,j)=(0,1),$ and these three 
pairs are excluded from the set $A.$  Thus, the supremum in \eqref{definitionOfK1} is finite and positive.

We let $K_{2}$ be an upper bound on the norm of some particular functions in a certain space, denoted $\mathcal{B}_{\rho}^{0}$ and defined in Section \ref{duchonRobertSection} below, that will be used for the analysis of the $w^n$ equation:
\begin{equation}\nonumber
\left\|\frac{16\pi^{2}}{L_{1}^{2}}\sin\left(\frac{2\pi x}{L_{1}}\right)\sin\left(\frac{4\pi x}{L_{1}}\right)\right\|_{\mathcal{B}_{\rho}^{0}}
\leq K_{2},
\qquad
\left\|\frac{16\pi^{2}}{L_{1}^{2}}\sin^{2}\left(\frac{4\pi x}{L_{1}}\right)\right\|_{\mathcal{B}_{\rho}^{0}}\leq K_{2},
\end{equation}
\begin{equation}\label{secondK2}
\left\|\frac{4\pi}{L_{1}}\sin\left(\frac{2\pi x}{L_{1}}\right)\right\|_{\mathcal{B}_{\rho}^{0}}\leq K_{2},
\qquad
\left\|\frac{8\pi}{L_{1}}\sin\left(\frac{4\pi x}{L_{1}}\right)\right\|_{\mathcal{B}_{\rho}^{0}}\leq K_{2},
\end{equation}
\begin{equation}\nonumber
\left\|\frac{16\pi^{2}}{L_{2}^{2}}\sin\left(\frac{2\pi y}{L_{2}}\right)\sin\left(\frac{4\pi y}{L_{2}}
\right)\right\|_{\mathcal{B}_{\rho}^{0}}
\leq K_{2},
\qquad
\left\|\frac{16\pi^{2}}{L_{2}^{2}}\sin^{2}\left(\frac{4\pi y}{L_{2}}\right)\right\|_{\mathcal{B}_{\rho}^{0}}\leq K_{2},
\end{equation}
\begin{equation}\nonumber
\left\|\frac{4\pi}{L_{2}}\sin\left(\frac{2\pi y}{L_{2}}\right)\right\|_{\mathcal{B}_{\rho}^{0}}\leq K_{2},
\qquad
\left\|\frac{8\pi}{L_{2}}\sin\left(\frac{4\pi y}{L_{2}}\right)\right\|_{\mathcal{B}_{\rho}^{0}}\leq K_{2}.
\end{equation}

Finally, we introduce a constant $K$ that will be used in the bound on the forcing terms:
\begin{multline}\nonumber
K=\max\Bigg\{
\frac{3M_{1,1}^{1/2}M_{3}K_{2}}{\left\|\cos\left(\frac{2\pi x}{L_{1}}\right)\right\|_{\mathcal{B}_{\rho}^{0}}},
\frac{3M_{1,1}^{1/2}M_{3}K_{2}}{\left\|\cos\left(\frac{4\pi x}{L_{1}}\right)\right\|_{\mathcal{B}_{\rho}^{0}}},
\frac{3M_{1,2}^{1/2}M_{3}K_{2}}{\left\|\cos\left(\frac{2\pi y}{L_{2}}\right)\right\|_{\mathcal{B}_{\rho}^{0}}},
\frac{3M_{1,2}^{1/2}M_{3}K_{2}}{\left\|\cos\left(\frac{4\pi y}{L_{2}}\right)\right\|_{\mathcal{B}_{\rho}^{0}}},
\\
\frac{3M_{1,2}^{1/2}M_{3}K_{2}}{\left\|\cos\left(\frac{2\pi x}{L_{1}}\right)\right\|_{\mathcal{B}_{\rho}^{0}}},
\frac{3M_{1,2}^{1/2}M_{3}K_{2}}{\left\|\cos\left(\frac{4\pi x}{L_{1}}\right)\right\|_{\mathcal{B}_{\rho}^{0}}},
\frac{3M_{1,1}^{1/2}M_{3}K_{2}}{\left\|\cos\left(\frac{2\pi y}{L_{2}}\right)\right\|_{\mathcal{B}_{\rho}^{0}}},
\frac{3M_{1,1}^{1/2}M_{3}K_{2}}{\left\|\cos\left(\frac{4\pi y}{L_{2}}\right)\right\|_{\mathcal{B}_{\rho}^{0}}}
\Bigg\}.
\end{multline}

\section{Goodman's toy model with added forcing}\label{goodmanSection}

In \cite{goodman}, Goodman proved that small solutions of the one-dimensional Kuramoto-Sivashinsky
equation exist and stay small for all time, using a Lyapunov function argument.  In his proof, the domain can be of arbitrary size, and hence there can be  any number of linearly growing modes.  First, however, he motivated the argument with a toy model, which was constructed by considering the case in which there was only one growing mode,
and neglecting contributions to the evolution from Fourier modes other than the first and second  modes.  The toy model demonstrated how energy transfers between a growing mode and a decaying mode, achieving balance.
We make two modifications to Goodman's toy model: we have a small parameter in front of the exponential growth
term in the evolution equation for the growing mode (this growth term was of unit size in \cite{goodman}), and 
we allow a given forcing as well.  In this section, we develop bounds in
Proposition \ref{lyapunovArgument} and 
Proposition \ref{enhancedBBound} 
that will be utilized 
in the induction argument in Section \ref{inductionSection} below.

We study the following system
\begin{equation}\label{aFromSystem}
a_{t}=\varepsilon_{i} a+\frac{8\pi^{2}}{L_{i}^{2}}ab+Q_{1},
\end{equation}
\begin{equation}\label{bFromSystem}
b_{t}=-B_{i}b-\frac{2\pi^{2}}{L_{i}^{2}}a^{2}+Q_{2},
\end{equation}
for $i\in\{1,2\}$, where $Q_1$ and $Q_2$ are given functions in time.  
For existence and uniqueness of solutions, at least for short time, it is enough to assume that $Q_{i}\in L^1((0,t))$. We will need a bit more hypotheses on these functions. 
For the remainder of the section, we fix a choice for $i\in\{1,2\}.$
We will assume the following bounds for $Q_{1}$ and  $Q_{2}:$ 
\begin{equation}\label{FBounds}
\sup_{t\in[0,\infty)}|Q_{1}|\leq 2K\varepsilon^{2},
\qquad
\sup_{t\in[0,\infty)}|Q_{2}|\leq 2K\varepsilon^{2}.
\end{equation}

\begin{proposition}\label{lyapunovArgument}   
Assume \eqref{FBounds} holds and let $a$ and $b$ solve \eqref{aFromSystem}-\eqref{bFromSystem}.
There exists $\varepsilon_{*}>0$  such that for any value of $\varepsilon>0$ satisfying $\varepsilon\in(0,\varepsilon_{*}),$ if $a^{2}(0)+b^{2}(0)\leq M_{1,i}\,\varepsilon/4,$ then
$a^{2}(t)+b^{2}(t)\leq 4M_{1,i}\,\varepsilon$ for all $t>0$.
\end{proposition}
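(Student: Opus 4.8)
The plan is to adapt Goodman's Lyapunov-function argument by introducing the weighted, shifted functional
\[
V(t)=a^{2}+4b^{2}+\frac{2\varepsilon_{i}L_{i}^{2}}{\pi^{2}}\,b .
\]
The weight $4$ on $b^{2}$ is dictated by the requirement that the cubic interactions cancel: differentiating $a^{2}+4b^{2}$ along \eqref{aFromSystem}--\eqref{bFromSystem} produces $+\frac{16\pi^{2}}{L_{i}^{2}}a^{2}b$ from the $a$-equation and $-\frac{16\pi^{2}}{L_{i}^{2}}a^{2}b$ from the $b$-equation, and these sum to zero (this is the two-mode analogue of the conservation of energy by the nonlinearity exploited in \cite{goodman}). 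The linear shift $\frac{2\varepsilon_{i}L_{i}^{2}}{\pi^{2}}b$ is the new ingredient needed because $a$ is linearly unstable: the $a$-equation has \emph{no} damping of its own, so no positive-definite energy such as $a^{2}+4b^{2}$ can be monotone, and the shift is what encodes the stabilizing nonlinear transfer of energy into the strongly damped mode $b$.

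First I would compute $\frac{d}{dt}V$. After the cubic cancellation, the crucial point is that the derivative of the shift term contributes $-\frac{2\varepsilon_{i}L_{i}^{2}}{\pi^{2}}\cdot\frac{2\pi^{2}}{L_{i}^{2}}a^{2}=-4\varepsilon_{i}a^{2}$ through the quadratic term $-\frac{2\pi^{2}}{L_{i}^{2}}a^{2}$ in \eqref{bFromSystem}; combined with the destabilizing $+2\varepsilon_{i}a^{2}$ from the linear growth, this yields the \emph{net dissipative} coefficient $-2\varepsilon_{i}a^{2}$. One is left with
\[
\frac{d}{dt}V=-2\varepsilon_{i}a^{2}-8B_{i}b^{2}-\frac{2\varepsilon_{i}L_{i}^{2}B_{i}}{\pi^{2}}b+2aQ_{1}+8bQ_{2}+\frac{2\varepsilon_{i}L_{i}^{2}}{\pi^{2}}Q_{2}.
\]
As a consistency check, I note that $\lambda=\frac{2\varepsilon_{i}L_{i}^{2}}{\pi^{2}}$ is precisely the value for which completing the square in $b$ below is centered at the nontrivial equilibrium $b_{*}=-\frac{\varepsilon_{i}L_{i}^{2}}{8\pi^{2}}$ of the unforced system; the two requirements (net $a^{2}$-damping and square centered at $b_{*}$) single out the same coefficient.

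Next I would reduce this to a scalar differential inequality. A completing-the-square (or Young) estimate absorbs the linear term $-\frac{2\varepsilon_{i}L_{i}^{2}B_{i}}{\pi^{2}}b$ into the $b$-dissipation at the cost of an $O(\varepsilon^{2})$ constant, and the forcing terms are estimated by \eqref{FBounds} together with the a priori sizes $|a|,|b|=O(\varepsilon^{1/2})$ furnished by the bootstrap hypothesis, so each forcing contribution is $O(\varepsilon^{5/2})$. Since $B_{i}\geq 2\varepsilon_{i}$ for small $\varepsilon$, the surviving dissipation dominates $-2\varepsilon_{i}(a^{2}+4b^{2})$, and since the shift term is negligible relative to $a^{2}+4b^{2}$ at the scale $V\sim\varepsilon$, I obtain $\frac{d}{dt}V\leq-2\varepsilon_{i}V+C\varepsilon^{2}$. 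The comparison principle then traps $V(t)\leq\max\{V(0),\,C\varepsilon^{2}/(2\varepsilon_{i})\}$.

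Finally I would close with a continuity/bootstrap step and the numerology. The hypothesis $a^{2}(0)+b^{2}(0)\leq M_{1,i}\varepsilon/4$ gives $V(0)\leq M_{1,i}\varepsilon+O(\varepsilon^{3/2})$, while the forcing bound \eqref{FBounds} is calibrated through $K$ and the $M$'s so that $C\varepsilon^{2}/(2\varepsilon_{i})$ also lies well below $4M_{1,i}\varepsilon$. Converting back via $a^{2}+b^{2}\leq a^{2}+4b^{2}=V-\frac{2\varepsilon_{i}L_{i}^{2}}{\pi^{2}}b\leq V+O(\varepsilon^{3/2})$ then yields $a^{2}(t)+b^{2}(t)\leq 4M_{1,i}\varepsilon$ with ample margin for $\varepsilon$ small; since this strictly improves the bootstrap assumption, the bound persists for all $t>0$. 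I expect the main obstacle to be the conceptual first step rather than any computation: because $a$ is anti-damped, the argument hinges entirely on recognizing that the quadratic term in the $b$-equation, viewed through the linear-in-$b$ shift in $V$, manufactures the missing damping for $a^{2}$, and on choosing the shift coefficient so that the net $a^{2}$-coefficient is strictly negative rather than merely zero. The subsequent completing-the-square and Gronwall steps are routine.
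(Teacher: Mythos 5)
Your Lyapunov function has the right structure --- the $1{:}4$ weighting of $a^{2}$ and $b^{2}$ chosen to cancel the cubic interactions, plus a linear-in-$b$ shift whose image under the quadratic term $-\frac{2\pi^{2}}{L_{i}^{2}}a^{2}$ of \eqref{bFromSystem} manufactures damping for the anti-damped mode $a$ --- and your derivative computation is correct. This is essentially the paper's argument: the paper uses $G=\frac{1}{2}a^{2}+2b^{2}+\frac{L_{i}^{2}\varepsilon}{\pi^{2}}b$ (one half of your $V$) and a trapping-region argument (``$G\geq M_{1,i}\varepsilon\Rightarrow G_{t}<0$'') with Young's inequality in place of your bootstrap-plus-Gronwall; those structural differences are immaterial. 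The genuine gap is the calibration of the shift: you take its coefficient proportional to $\varepsilon_{i}$, whereas the paper takes it proportional to $\varepsilon=\max\{\varepsilon_{1},\varepsilon_{2}\}$. The distinction matters because nothing in the hypotheses bounds $\varepsilon_{i}$ from below in terms of $\varepsilon$: the two periods $L_{1},L_{2}$ are chosen independently, the forcing bound \eqref{FBounds} and the desired conclusion are both at scale $\varepsilon$, and the proposition is invoked in Proposition \ref{uniformEstimate} for both $i=1$ and $i=2$ with the same $\varepsilon$, so the regime $\varepsilon_{i}\ll\varepsilon$ is squarely in scope.

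In that regime your argument does not close. Your net dissipation on $a$ is $-2\varepsilon_{i}a^{2}$, while under the bootstrap the forcing contribution $2aQ_{1}$ is only known to be $O(\varepsilon^{1/2}\cdot\varepsilon^{2})=O(\varepsilon^{5/2})$; the comparison principle then gives the ceiling $V(t)\leq\max\{V(0),\,R/(2\varepsilon_{i})\}$ with $R\gtrsim\varepsilon^{5/2}$, i.e.\ a ceiling of order $\varepsilon^{5/2}/\varepsilon_{i}$. If, say, $\varepsilon_{i}\sim\varepsilon^{2}$, this is of order $\varepsilon^{1/2}\gg 4M_{1,i}\varepsilon$, so your assertion that $C\varepsilon^{2}/(2\varepsilon_{i})$ ``lies well below $4M_{1,i}\varepsilon$'' is false: it silently assumes $\varepsilon_{i}\gtrsim\varepsilon$, and the bootstrap cannot be closed. (In the limit the damping degenerates entirely: as $\varepsilon_{i}\to 0$ your $V$ tends to $a^{2}+4b^{2}$ and $\frac{d}{dt}V$ retains no negative multiple of $a^{2}$ at all, while $Q_{1}$ may still be as large as $2K\varepsilon^{2}$.) The repair is exactly the paper's choice: take the shift $\frac{2\varepsilon L_{i}^{2}}{\pi^{2}}b$ with the maximum $\varepsilon$, so that the net coefficient of $a^{2}$ is $2\varepsilon_{i}-4\varepsilon\leq-2\varepsilon$, uniformly in $\varepsilon_{i}\in(0,\varepsilon]$; with damping at scale $\varepsilon$ your completing-the-square and Gronwall steps then go through, with a ceiling of order $\varepsilon^{2}/\varepsilon=\varepsilon$ carrying a small constant, and the rest of your write-up stands.
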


\begin{proof}
We define a Lyapunov function
\begin{equation}\nonumber
G(a,b)=\frac{1}{2}a^{2}+2b^{2}+\frac{L_{i}^{2}\varepsilon}{\pi^{2}}b.
\end{equation}
Let us assume that $G(a,b)\geq M_{1,i}\varepsilon.$
Then we have that
\begin{multline}\label{workingOutGConsequence}
\frac{1}{2}a^{2}+2b^{2}
\geq M_{1,i}\,\varepsilon-\left|\frac{L_{i}^{2}\varepsilon b}{\pi^{2}}\right|
\geq M_{1,i}\,\varepsilon-b^{2}-\frac{L_{i}^{4}\varepsilon^{2}}{4\pi^{4}}
\geq \frac{M_{1,i}}{2}\varepsilon - b^{2}.
\end{multline}
For the first inequality, we have used that, by Young's inequality,
\begin{equation}\label{youngForG}
\left|\frac{L_{i}^{2}\varepsilon b}{\pi^{2}}\right| \leq b^{2}+\frac{L_{i}^{4}\varepsilon^{2}}{4\pi^{4}},
\end{equation}
while for the last inequality we have used that it is possible to choose $\varepsilon$ small enough so that
\begin{equation}\nonumber
\frac{L_{i}^{4}\varepsilon^{2}}{4\pi^{4}}\leq\frac{M_{1,i}\,\varepsilon}{2}.
\end{equation}
It then follows from  \eqref{workingOutGConsequence} that
\begin{equation}\nonumber
\frac{1}{2}a^{2}+3b^{2}\geq\frac{M_{1,i}\,\varepsilon}{2},
\end{equation}
from which we conclude that
\begin{equation}\label{abBoundedBelow}
a^{2}+b^{2}\geq\frac{M_{1,i}\,\varepsilon}{6}.
\end{equation}

We next take the derivative of $G$ with respect to time and use \eqref{aFromSystem}-\eqref{bFromSystem}:
\begin{equation}\nonumber
G_{t}=(\varepsilon_{i}-2\varepsilon) a^{2}-4B_{i}b^{2}-\frac{L_{i}^{2}\varepsilon B_{i}b}{\pi^{2}}+aQ_{1}+4bQ_{2}
+\frac{L_{i}^{2}\varepsilon}{\pi^{2}}Q_{2}.
\end{equation}
We rewrite this expression as
\begin{equation}\nonumber
G_{t}=\Upsilon_{1}+\Upsilon_{2},
\end{equation}
where $\Upsilon_{1}$ and $\Upsilon_{2}$ are given by
\begin{equation}\nonumber
\Upsilon_{1}=\left(\frac{\varepsilon_{i}}{2}-\varepsilon\right)
a^{2}-2B_{i}b^{2}-\frac{L_{i}^{2}\varepsilon B_{i}b}{\pi^{2}},
\end{equation}
\begin{equation}\nonumber
\Upsilon_{2}=\left(\frac{\varepsilon_{i}}{2}-\varepsilon\right)a^{2}-2B_{i}b^{2}
+aQ_{1}+4bQ_{2}+\frac{L_{i}^{2}\varepsilon Q_{2}}{\pi^{2}}.
\end{equation}

We will show that $\Upsilon_{1}$ and $\Upsilon_{2}$ are negative when $a$ and $b$ satisfy
\eqref{abBoundedBelow}, at least for sufficiently small values of $\varepsilon.$
For $\Upsilon_{1},$ it is enough to consider the case $b<0$, as $\Upsilon_{1}<0$ if $b\geq 0$.
Next, we observe that if $b<-\frac{L_{i}^{2}\varepsilon}{2\pi^{2}},$ then
\begin{equation}\nonumber
-2B_{i}b^{2}-\frac{L_{i}^{2}\varepsilon B_{i}b}{\pi^{2}}<0,
\end{equation}
and thus $\Upsilon_{1}<0.$  The remaining case to consider is
\begin{equation}\label{bSmallPositive}
-\frac{L_{i}^{2}\varepsilon}{2\pi^{2}}<b<0.
\end{equation}
For $\varepsilon$ small enough, \eqref{abBoundedBelow} and \eqref{bSmallPositive} together imply
\begin{equation}\nonumber
a^{2}\geq \frac{M_{1,i}\varepsilon}{12}.
\end{equation}
Hence, if \eqref{bSmallPositive} holds,  we may conclude the following bounds:
\begin{equation}\nonumber
\left(\frac{\varepsilon_{i}}{2}-\varepsilon\right)a^{2}\leq -\frac{\varepsilon}{2}a^{2}
\leq
-\frac{M_{1,i}\varepsilon^{2}}{24},
\end{equation}
\begin{equation}\nonumber
\left|\frac{L_{i}^{2}\varepsilon B_{i}b}{\pi^{2}}\right|\leq\frac{L_{i}^{4}\varepsilon^{2}B_{i}}{2\pi^{4}}.
\end{equation}
Using that $M_{1,i}=\frac{12B_{i}L_{i}^{4}}{\pi^{4}}$ by definition,
we have
\begin{equation}\nonumber
-\frac{\varepsilon}{2}a^{2}-\frac{L_{i}^{2}\varepsilon B_{i}b}{\pi^{2}}
\leq -\frac{M_{1,i}\varepsilon^{2}}{24}+\frac{L_{i}^{4}\varepsilon^{2}B_{i}}{2\pi^{4}}
=0.
\end{equation}
We conclude that $\Upsilon_{1}<0.$ We have shown then that $\Upsilon_{1}<0$ in every case.

We now turn to $\Upsilon_{2}.$ We estimate the terms containing $Q_1$ and $Q_2$ as follows.
By Young's inequality,
\begin{equation}\nonumber
|aQ_{1}|\leq\frac{\varepsilon a^{2}}{4}+\frac{Q_{1}^{2}}{\varepsilon},
\end{equation}
which, combined with \eqref{FBounds}, gives
\begin{equation}\nonumber
|aQ_{1}|\leq\frac{\varepsilon a^{2}}{4}+\frac{4K^{2}\varepsilon^{4}}{\varepsilon}=\frac{\varepsilon a^{2}}{4}
+4K^{2}\varepsilon^{3}.
\end{equation}
We similarly bound $4bQ_{2}$ as
\begin{equation}\nonumber
|4bQ_{2}|\leq B_{i}b^{2}+\frac{4Q_{2}^{2}}{B_{i}}\leq B_{i}b^{2}+\frac{16K^{2}\varepsilon^{4}}{B_{i}}.
\end{equation}
Again using \eqref{FBounds}, we bound the last term in $\Upsilon_2$ as
\begin{equation}\nonumber
\left|\frac{L_{i}^{2}\varepsilon Q_{2}}{\pi^{2}}\right| \leq \frac{2L_{i}^{2}K\varepsilon^{3}}{\pi^{2}}.
\end{equation}
These estimates in turn give  the following bound on $\Upsilon_2$:
\begin{equation}
\Upsilon_{2}\leq -\frac{\varepsilon a^{2}}{4}-B_{i}b^{2}
+\left[
4K^{2}\varepsilon^{3}+\frac{16K^{2}\varepsilon^{4}}{B_{i}}
+\frac{2L_{i}^{2}K\varepsilon^{3}}{\pi^{2}}\right].
\end{equation}
But we assumed that $G\geq M_{1,i}\varepsilon$, which implies
$a^{2}+b^{2}\geq M_{1,i}\varepsilon/6$ as shown above, so that
\begin{equation}\nonumber
-\frac{\varepsilon a^{2}}{4}-B_{i}b^{2} \leq -\frac{\varepsilon a^{2}}{4}-\frac{\varepsilon b^{2}}{4}\leq 
-\frac{M_{1,i}\varepsilon^{2}}{24}.
\end{equation}
Therefore,
\begin{equation}\nonumber
\Upsilon_{2}< -\frac{M_{1,i}\varepsilon^{2}}{24}
+\left[
4K^{2}\varepsilon^{3}+\frac{16K^{2}\varepsilon^{4}}{B_{i}}
+\frac{2L_{i}^{2}K\varepsilon^{3}}{\pi^{2}}\right].
\end{equation}
We can take $\varepsilon$ small enough so that
\begin{equation}\nonumber
\left|
4K^{2}\varepsilon^{3}+\frac{16K^{2}\varepsilon^{4}}{B_{i}}
+\frac{2L_{i}^{2}K\varepsilon^{3}}{\pi^{2}}\right|
\leq \frac{M_{1,i}\varepsilon^{2}}{48}.
\end{equation}
For such values of $\varepsilon$  we have $\Upsilon_{2}<0.$

We have concluded that $G\geq M_{1,i}\varepsilon$ implies $G_{t}<0.$  Hence, if $G$ is initially less than
$M_{1,i}\,\varepsilon,$ then necessarily $G<M_{1,i}\,\varepsilon$ for all $t>0$.

Next, we ask under which conditions  $G<M_{1,i}\varepsilon$ initially.
We observe that,  from the definition of $G$ and \eqref{youngForG},
\begin{equation}\nonumber
G\leq\frac{1}{2}a^{2}+2b^{2}+b^{2}+\frac{L_{i}^{4}\varepsilon^{2}}{4\pi^{4}}\leq 3(a^{2}+b^{2})
+\frac{L_{i}^{4}\varepsilon^{2}}{4\pi^{4}}.
\end{equation}
Consequently,  $G(0)<M_{1,i}\varepsilon$ provided $a^{2}(0)+b^{2}(0)\leq\frac{M_{1,i}\varepsilon}{4}$ (which holds 
by hypothesis) and provided $\varepsilon$ is taken small enough so that
$\frac{L_{i}^{4}\varepsilon^{2}}{4\pi^{4}}<\frac{M_{1,i}\varepsilon}{4}$.

Assuming then $G(t)<M_{1,i}\varepsilon$  for all $t>0$, we ask what can we say about $a^{2}(t)+b^{2}(t)$.
We again use the definition of $G$ together with
\eqref{youngForG}, now finding that
\begin{multline}\nonumber
M_{1,i}\varepsilon>\frac{1}{2}a^{2}+2b^{2}+\frac{L_{i}^{2}\varepsilon b}{\pi^{2}}
\geq \frac{1}{2}a^{2}+2b^{2}-\left|\frac{L_{i}^{2}\varepsilon b}{\pi^{2}}\right|
\\
\geq \frac{1}{2}a^{2}+2b^{2}-b^{2}-\frac{L_{i}^{4}\varepsilon^{2}}{4\pi^{4}}
\geq\frac{1}{2}\left(a^{2}+b^{2}\right)-\frac{L_{i}^{4}\varepsilon^{2}}{4\pi^{4}}.
\end{multline}
Rearranging the left-hand and right-hand sides of this expression gives
\begin{equation}\nonumber
\frac{1}{2}\left(a^{2}+b^{2}\right) < M_{1,i}\varepsilon+\frac{L_{i}^{4}\varepsilon^{2}}{4\pi^{4}}.
\end{equation}
We then take $\varepsilon$ small enough so that
$\frac{L_{i}^{4}\varepsilon^{2}}{4\pi^{4}}\leq M_{1,i}\varepsilon.$
Finally, we conclude
\begin{equation}\nonumber
a^{2}+b^{2} < 4M_{1,i}\varepsilon.
\end{equation}
This completes the proof.
\end{proof}

\begin{proposition}\label{enhancedBBound}
Under the hypotheses of Proposition \ref{lyapunovArgument}, if also
$b(0)\leq M_{2,i}\varepsilon/2,$ then there exists $\varepsilon_{*}>0$ such that
for any value of $\varepsilon\in(0,\varepsilon_{*}),$  $|b(t)|\leq M_{2,i}\,\varepsilon$ for all $t>0$.
\end{proposition}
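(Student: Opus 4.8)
The plan is to observe that, once Proposition~\ref{lyapunovArgument} has already confined $a^{2}(t)+b^{2}(t)$ to size $\varepsilon$, equation~\eqref{bFromSystem} for $b$ becomes a genuinely linearly \emph{damped} scalar ODE whose inhomogeneity is now \emph{known} to be of size $\varepsilon$, and then to run a Duhamel estimate on it. First I would record that Proposition~\ref{lyapunovArgument} gives $a^{2}(t)\le a^{2}(t)+b^{2}(t)\le 4M_{1,i}\varepsilon$ for all $t$, so that the quadratic term in~\eqref{bFromSystem} satisfies
\[\frac{2\pi^{2}}{L_{i}^{2}}a^{2}(t)\le\frac{8\pi^{2}M_{1,i}}{L_{i}^{2}}\varepsilon=M_{2,i}\varepsilon.\]
This is exactly the point of the definition of $M_{2,i}$: it is chosen so that the forcing of the $b$-equation coming from $a^{2}$ is bounded by $M_{2,i}\varepsilon$.

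Next I would treat~\eqref{bFromSystem} as the scalar linear problem $b_{t}=-B_{i}b+g(t)$ with $g=-\tfrac{2\pi^{2}}{L_{i}^{2}}a^{2}+Q_{2}$, and solve it with the integrating factor $e^{B_{i}t}$:
\[b(t)=e^{-B_{i}t}b(0)+\int_{0}^{t}e^{-B_{i}(t-s)}g(s)\,ds.\]
From the previous display together with~\eqref{FBounds} we have $|g(s)|\le M_{2,i}\varepsilon+2K\varepsilon^{2}$, and since $\int_{0}^{t}e^{-B_{i}(t-s)}\,ds=(1-e^{-B_{i}t})/B_{i}$, the resulting bound on $|b(t)|$ is a convex combination (with weights $e^{-B_{i}t}$ and $1-e^{-B_{i}t}$) of $|b(0)|$ and $(M_{2,i}\varepsilon+2K\varepsilon^{2})/B_{i}$. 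Using $|b(0)|\le M_{2,i}\varepsilon/2$ from the hypothesis, it then suffices to verify $(M_{2,i}\varepsilon+2K\varepsilon^{2})/B_{i}\le M_{2,i}\varepsilon$, i.e. $2K\varepsilon\le(B_{i}-1)M_{2,i}$, which holds for $\varepsilon$ small \emph{provided} $B_{i}>1$. An equivalent and more transparent formulation is a barrier argument: were $b$ to reach the value $M_{2,i}\varepsilon$ from below, \eqref{bFromSystem} would give $b_{t}\le-B_{i}M_{2,i}\varepsilon+2K\varepsilon^{2}<0$, while were it to reach $-M_{2,i}\varepsilon$ from above it would give $b_{t}\ge(B_{i}-1)M_{2,i}\varepsilon-2K\varepsilon^{2}>0$; hence $b$ cannot leave $[-M_{2,i}\varepsilon,M_{2,i}\varepsilon]$, the $Q_{2}$ contributions being harmless since they are $O(\varepsilon^{2})$ against restoring terms that are $O(\varepsilon)$.

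The one genuine point — and really the only obstacle — is the structural inequality $B_{i}>1$, which supplies the restoring force at the lower barrier $b=-M_{2,i}\varepsilon$. Because making $\varepsilon$ small forces each $L_{i}$ to lie just above $2\pi$, one has $4\pi/L_{i}$ just below $2$ and hence $B_{i}=(4\pi/L_{i})^{4}-(4\pi/L_{i})^{2}$ just below $12$; solving $B_{i}=1$ shows $B_{i}>1$ for all $L_{i}$ up to roughly $9.9$, comfortably covering the regime in play. Every remaining inequality is of the schematic form ``$O(\varepsilon)$ beats $O(\varepsilon^{2})$'' and is absorbed by shrinking $\varepsilon_{*}$. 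I would note finally that the argument uses only the magnitude $|b(0)|\le M_{2,i}\varepsilon/2$: the conclusion $|b(t)|\le M_{2,i}\varepsilon$ for \emph{all} $t>0$ (including small $t$) genuinely requires $b(0)$ to be of size $\varepsilon$, and not merely of the size $\varepsilon^{1/2}$ permitted by Proposition~\ref{lyapunovArgument}.
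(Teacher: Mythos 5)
Your proof is correct and takes essentially the same route as the paper's: use Proposition \ref{lyapunovArgument} to bound the forcing term $\tfrac{2\pi^{2}}{L_{i}^{2}}a^{2}\le M_{2,i}\varepsilon$ (the very purpose of the definition of $M_{2,i}$), write $b$ by Duhamel with the damping factor $e^{-B_{i}t}$, and absorb the $O(\varepsilon^{2})$ contributions of $Q_{2}$ by shrinking $\varepsilon_{*}$. Your convex-combination packaging (and the equivalent barrier argument), which needs only $B_{i}>1$, is a mild sharpening of the paper's looser estimate $|b(t)|\le e^{-B_{i}t}|b(0)|+\tfrac{1}{B_{i}}\bigl(M_{2,i}\varepsilon+2K\varepsilon^{2}\bigr)$ combined with $B_{i}>10$; otherwise the two arguments coincide.
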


\begin{proof}
From Proposition \ref{lyapunovArgument}, we have $(a(t))^{2}\leq 4M_{1,i}\,\varepsilon$  for all $t>0$.
From \eqref{FBounds}, we also have
$|Q_{2}(t)|\leq 2K\varepsilon^{2}$ for all $t>0$.
Using  Duhamel's Formula, we rewrite the equation for $b$ in integral form:
\begin{equation}\nonumber
b(t)=e^{-B_{i}t}b(0)+e^{-B_{i}t}\int_{0}^{t}e^{B_{i}s}\left[\frac{2\pi^{2}}{L_{i}^{2}}a^{2}(s)+Q_{2}(s)\right]
\ ds.
\end{equation}
We recall that $M_{2,i}=\frac{8\pi^{2}M_{1,i}}{L_{i}^{2}}$, so that
\begin{equation}\nonumber
|b(t)|\leq e^{-B_{i}t}|b(0)|+\left(M_{2,i}\varepsilon+2K\varepsilon^{2}\right)e^{-B_{i}t}\int_{0}^{t}e^{B_{i}s}\ ds.
\end{equation}
We evaluate the integral and bound the result as
\begin{equation}\nonumber
|b(t)|\leq e^{-B_{i}t}|b(0)|+\frac{1}{B_{i}}\left(M_{2,i}\varepsilon+2K\varepsilon^{2}\right).
\end{equation}
Now $B_{i}$ is approximately equal to $12$, since we are taking $L_{i}$
close to $2\pi;$ we may thus say $B_{i}>10.$  
Lastly, by again taking $\varepsilon$ sufficiently small and from the hypothesis 
$|b(0)|\leq \frac{M_{2,i}}{2}\varepsilon$, it follows that
\begin{equation}\nonumber
|b(t)|\leq M_{2,i}\varepsilon,
\end{equation}
for all $t>0.$
\end{proof}

\section{The Duchon-Robert framework}\label{duchonRobertSection}

In this section we develop the estimates we will use for the iterates $w^{n}.$  We will assume that $w^n$ belongs to suitable function spaces of analytic functions in time based on the Wiener algebra. These spaces are Banach algebras and are well adapted to the inductive argument carried out in Section \ref{inductionSection}. The bounds on $w^n$ follows from estimates on the semigroup generated by the linearized operator and by estimating the integral in the mild formulation of the PDE, exploiting the algebra structure to control the nonlinearity.  These spaces and similar bounds were used by Duchon and Robert \cite{duchonRobert} to prove the global existence of vortex sheet solutions in incompressible two-dimensional fluid flow.

For $m\in\mathbb{N}$ and $\rho\geq 0,$ we define the space $B^{m}_{\rho}$ to be the space of distributions on the torus for for which the following weighted sum of  their Fourier coefficients is finite:
\begin{equation}\nonumber
f\in B^{m}_{\rho} \iff \|f\|_{B^{m}_{\rho}}=\sum_{(k,j)\in\mathbb{Z}^{2}}e^{\rho(|k|+|j|)}(1+|k|+|j|)^{m}|f_{k,j}|<\infty.
\end{equation}
We also have a space-time version of this space, which we call $\mathcal{B}^{m}_{\rho}$, defined as the space of distributions on $[0,\infty)\times \mathbb{T}^2$ such that
\begin{equation}\nonumber
   \|g\|_{\mathcal{B}^{m}_{\rho}}
=\sum_{(k,j)\in\mathbb{Z}^{2}}e^{\rho(|k|+|j|)}(1+|k|+|j|)^{m}\sup_{t\in[0,\infty)}|g_{k,j}(t)|<\infty.
\end{equation}
We observe that elements of both $B^m_\rho$ and $\mathcal{B}^m_\rho$ are actually functions that are analytic  in space with radius of analyticity at least $\rho>0$ and at least bounded in time.

The spaces $B^{0}_{\rho}$ and $\mathcal{B}^{0}_{\rho}$ are Banach algebras;
indeed, $B_{0}^{0}$ is exactly the Wiener algebra.  If $f$ and $g$ are both in 
${\mathcal{B}}^{0}_{\rho},$ we have
\begin{multline}\nonumber
\|fg\|_{{\mathcal{B}}^{0}_{\rho}}
\\
\leq \sum_{(k,j)\in\mathbb{Z}^{2}}
\sum_{(\ell,n)\in\mathbb{Z}}e^{\rho(|k-\ell|+|j-n|)}e^{\rho(|\ell|+|n|)}
\left(\sup_{t\in[0,\infty)}|f_{k-\ell,j-n}(t)|\right)\left(\sup_{t\in[0,\infty)}|g_{\ell,n}(t)|\right)
\\
 \leq \|f\|_{{\mathcal{B}}^0_{\rho}}\|g\|_{{\mathcal{B}}^0_{\rho}}.
\end{multline}
The analogous estimate for $B_{\rho}^{0}$ follows immediately by observing that $B_{\rho}^{m}$ consists precisely of the elements of $\mathcal{B}_{\rho}^{m}$ that are constant in time.
Then we may conclude (simply by the product rule) that the spaces $B^{1}_{\rho}$ and $\mathcal{B}^{1}_{\rho}$ 
are also Banach algebras.  Indeed,  a function $f$ is in $\mathcal{B}^{1}_{\rho}$ if and only if $f$ and its partial derivatives $\partial_{x}f$ and $\partial_{y}f$ are all in $\mathcal{B}_{\rho}^{0}.$

We note that we will not use the spaces $\mathcal{B}^{m}_{\rho}$ or $B^{m}_{\rho}$ for $m>1,$ although these
are Banach algebras as well (for the same reasons).

We define the operator $I^{+}$ by
\begin{equation}\nonumber
I^{+}h(\cdot,t)=\mathbb{P}_{5}\int_{0}^{t}e^{-(\Delta^{2}+\Delta)(t-s)}h(\cdot,s)\ ds,
\end{equation}
where the integral is intended in the Bochner sense and $e^{-t (\Delta^2+ \Delta)}$ denotes the $C^0$ (unbounded) semigroup generated by the linearized KSE operator on  $B_{\rho}^{m}$.
We will show that $I^{+}$ is  bounded from $\mathcal{B}^{0}_{\rho}$ to $\mathcal{B}^{1}_{\rho}.$
(This is the only fact needed for our purposes, but the integral is actually bounded from $\mathcal{B}^{0}_{\rho}$ to
$\mathcal{B}^{4}_{\rho}.$)
Let $h\in\mathcal{B}^{0}_{\rho}$ be given.  Then the norm of $I^{+}h$ is given by
\begin{multline}\nonumber
\|I^{+}h\|_{\mathcal{B}^{1}_{\rho}}
\\
=\sum_{(k,j)\notin A}e^{\rho(|k|+|j|)}(1+|k|+|j|)\sup_{t\in[0,\infty)}\left|
\int_{0}^{t}\exp\left\{\sigma(k,j)(t-s)\right\}
h_{k,j}(s)\ ds
\right|,
\end{multline}
where $\sigma$ is defined in \eqref{eq:sigmaDef}.
The triangle inequality then implies
\begin{multline}\nonumber
\|I^{+}h\|_{\mathcal{B}^{1}_{\rho}}
\leq
\sum_{(k,j)\notin A}
e^{\rho(|k|+|j|)}(1+|k|+|j|)\sup_{t\in[0,\infty)}\exp\left\{\sigma(k,j)t\right\}
\cdot
\\
\cdot\int_{0}^{t}\exp\left\{-\sigma(k,j)s\right\}
|h_{k,j}(s)|\ ds.
\end{multline}
We take the supremum of $|h_{k,j}(s)|$ in $s$, which we can then pull out to obtain:
\begin{multline}\nonumber
\|I^{+}h\|_{\mathcal{B}^{1}_{\rho}}\leq\left(\sum_{(k,j)\notin A}e^{\rho(|k|+|j|)}\sup_{t\in[0,\infty)}|h_{k,j}(t)|\right)
\\
\Bigg[\sup_{t\in[0,\infty)}\sup_{(k,j)\notin A}\Bigg((1+|k|+|j|)
\exp\left\{\sigma(k,j)t\right\}\int_{0}^{t}\exp\left\{-\sigma(k,j)s\right\}\ ds
\Bigg)\Bigg].
\end{multline}
The first factor on the right-hand side can simply be bounded by $\|h\|_{\mathcal{B}^{0}_{\rho}}.$
A  bound on the second factor (i.e., the double supremum) can be found by  directly computing the integral, which gives:
\begin{equation}\nonumber
\|I^{+}h\|_{\mathcal{B}^{1}_{\rho}}\leq \|h\|_{\mathcal{B}^{0}_{\rho}}
\left[\sup_{t\in[0,\infty)}\sup_{(k,j)\notin A}
\frac{(1+|k|+|j|)(1-\exp\{\sigma(k,j)t\}}
{-\sigma(k,j)}\right].
\end{equation}
The negative term in the numerator can be neglected.  Therefore, we have
\begin{equation}\nonumber
\|I^{+}h\|_{\mathcal{B}^{1}_{\rho}}\leq K_{1}\|h\|_{\mathcal{B}^{0}_{\rho}},
\end{equation}
where 
\begin{equation}\nonumber
K_{1}=\sup_{(k,j)\notin A}\frac{1+|k|+|j|}{-\sigma(k,j)}.
\end{equation}

We now turn to proving estimates on the semigroup.  We show that  $e^{(-\Delta^{2}-\Delta)t}$ maps
$\mathbb{P}_5(B^{1}_{\rho})$ into $\mathcal{B}^{1}_{\rho}$ boundedly. In fact, we first observe that
\begin{multline}\nonumber
\|e^{(-\Delta^{2}-\Delta)t} f\|_{\mathcal{B}_{\rho}^{1}}
\\
\leq \sum_{(k,j)\notin A}e^{\rho(|k|+|j|)}(1+|k|+|j|)\sup_{t\in[0,\infty)}
\exp\left\{\sigma(k,j)t\right\}|(f)_{k,j}|,
\end{multline}
if $f\in \mathbb{P}_5(B^{1}_{\rho})$.
The supremum is achieved at $t=0$ for every $(k,j)\notin A.$
(Recall that $f$ is supported in Fourier space only on wavenumbers  in the complement of the set $A.$)
We therefore have
\begin{equation}\label{eq:semigroupEst}
 \|e^{(-\Delta^{2}-\Delta)t} f\|_{\mathcal{B}^{1}_{\rho}}
\leq \sum_{(k,j)\notin A}e^{\rho(|k|+|j|)}(1+|k|+|j|)|(f)_{k,j}| = \|f\|_{B^{1}_{\rho}}.
\end{equation}
We will apply these semigroup estimates to the remainder terms $w^n$.

\section{Inductive argument and convergence}\label{inductionSection}

We are now ready to complete the proof of Theorem \ref{nontechnicalTheorem}.  First, in Proposition \ref{uniformEstimate} we obtain uniform bounds on the iterates by induction, using the bounds already established.  Then  in Theorem \ref{mainTheoremTechnical}, we state a precise version of our main result, existence of a global mild solution $\phi$, which follows by passing to the limit $n\to\infty$ and using compactness arguments.

\begin{proposition}\label{uniformEstimate}
Fix $\rho>0.$ Let $\varepsilon=\max\{\varepsilon_{1},\varepsilon_{2}\}$, where $\varepsilon_i$, $i=1,2$, is given in \eqref{eq:epBdef}.
Assume the initial data $a_{1,0}(0),$ $a_{2,0}(0),$ $a_{0,1}(0),$ $a_{0,2}(0),$ and $w(0)$  satisfy
\begin{equation}\nonumber
(a_{1,0}(0))^{2}+(a_{2,0}(0))^{2}\leq \frac{M_{1,1}\varepsilon}{4},
\end{equation}
\begin{equation}\nonumber
(a_{0,1}(0))^{2}+(a_{0,2}(0))^{2}\leq \frac{M_{1,2}\varepsilon}{4},
\end{equation}
\begin{equation}\nonumber
|a_{2,0}(0)|\leq\frac{M_{2,1}\varepsilon}{2},
\qquad |a_{0,2}(0)|\leq \frac{M_{2,2}\varepsilon}{2},
\end{equation}
\begin{equation}\nonumber
\|w(0)\|_{B_{\rho}^{1}}\leq \frac{M_{3}}{6}\varepsilon^{3/2}.
\end{equation}
Then there exists $\varepsilon_{*}>0$ such that for  $i\in\{1,2\},$
for all $\varepsilon_{i}\in(0,\varepsilon_{*}),$ and for all $n,$ the following bounds are satisfied:
\begin{equation}\label{anInductive}
\sup_{t\in[0,\infty)}|a^{n}_{1,0}|\leq 2M_{1,1}^{1/2}\varepsilon^{1/2},
\qquad
\sup_{t\in[0,\infty)}|a^{n}_{0,1}|\leq 2M_{1,2}^{1/2}\varepsilon^{1/2},
\end{equation}
\begin{equation}\label{bnInductive}
\sup_{t\in[0,\infty)}|a^{n}_{2,0}|\leq M_{2,1}\varepsilon,
\qquad
\sup_{t\in[0,\infty)}|a^{n}_{0,2}|\leq M_{2,2}\varepsilon,
\end{equation}
\begin{equation}\label{wnInductive}
\|w^{n}\|_{\mathcal{B}_{\rho}^{1}}\leq M_{3}\varepsilon^{3/2}.
\end{equation}
\begin{equation}\label{firstFInductive}
\sup_{t\in[0,\infty)}|F_{1,0,x}^{n}|\leq K\varepsilon^{2},
\qquad
\sup_{t\in[0,\infty)}|F_{1,0,y}^{n}|\leq K\varepsilon^{2},
\end{equation}
\begin{equation}\label{secondFInductive}
\sup_{t\in[0,\infty)}|F_{2,0,x}^{n}|\leq K\varepsilon^{2},
\qquad
\sup_{t\in[0,\infty)}|F_{2,0,y}^{n}|\leq K\varepsilon^{2}.
\end{equation}
\begin{equation}\label{thirdFInductive}
\sup_{t\in[0,\infty)}|F_{0,1,x}^{n}|\leq K\varepsilon^{2},
\qquad
\sup_{t\in[0,\infty)}|F_{0,1,y}^{n}|\leq K\varepsilon^{2},
\end{equation}
\begin{equation}\label{fourthFInductive}
\sup_{t\in[0,\infty)}|F_{0,2,x}^{n}|\leq K\varepsilon^{2},
\qquad
\sup_{t\in[0,\infty)}|F_{0,2,y}^{n}|\leq K\varepsilon^{2}.
\end{equation}

\end{proposition}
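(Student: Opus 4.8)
The plan is to argue by induction on $n$. I take as inductive hypothesis $P(n)$ the three amplitude-and-remainder bounds \eqref{anInductive}, \eqref{bnInductive}, and \eqref{wnInductive} for the $n$-th iterate; the forcing bounds \eqref{firstFInductive}--\eqref{fourthFInductive} are not assumed but are deduced from $P(n)$ as an intermediate step and then used to propagate the amplitude bounds to level $n+1$. For the base case I take the zeroth iterate to equal the (time-independent) initial data, so that $P(0)$ follows immediately from the smallness hypotheses: for instance $(a_{1,0}(0))^2+(a_{2,0}(0))^2\le M_{1,1}\varepsilon/4$ gives $|a_{1,0}(0)|\le \tfrac12 M_{1,1}^{1/2}\varepsilon^{1/2}\le 2M_{1,1}^{1/2}\varepsilon^{1/2}$, and $\|w(0)\|_{B^1_\rho}\le \tfrac{M_3}{6}\varepsilon^{3/2}\le M_3\varepsilon^{3/2}$.

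First I would derive the forcing bounds at level $n$ from $P(n)$. Each forcing coefficient is, up to division by the $\mathcal{B}^0_\rho$-norm of the appropriate cosine, a Fourier projection of $\Psi_3^{\cdot}+\Psi_5^{\cdot}+\Psi_6^{\cdot}$, and since projections do not increase the $\mathcal{B}^0_\rho$-norm, the Banach-algebra property of $\mathcal{B}^0_\rho$ applies termwise. The dominant term is $\Psi_5$: using \eqref{anInductive}, the bound \eqref{secondK2}, and $\|w^n_x\|_{\mathcal{B}^0_\rho}\le\|w^n\|_{\mathcal{B}^1_\rho}\le M_3\varepsilon^{3/2}$, one gets $\|\Psi_5^{x}\|_{\mathcal{B}^0_\rho}\le 2M_{1,1}^{1/2}M_3K_2\,\varepsilon^2$, while $\Psi_3$ (size $\varepsilon^3$) and $\Psi_6$ (size $\varepsilon^{5/2}$) are of higher order. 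The definition of $K$, with its numerical factor $3$, then absorbs all three contributions and yields $\sup_t|F^n_{\cdot}|\le K\varepsilon^2$, establishing \eqref{firstFInductive}--\eqref{fourthFInductive}.

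With the forcing bounds in hand I would propagate the amplitudes. Setting $Q_1=F^n_{1,0,x}+F^n_{1,0,y}$ and $Q_2=F^n_{2,0,x}+F^n_{2,0,y}$ gives $|Q_1|,|Q_2|\le 2K\varepsilon^2$, which is exactly \eqref{FBounds}; since each iterate carries the fixed initial data, the smallness hypotheses $a^2(0)+b^2(0)\le M_{1,i}\varepsilon/4$ and $b(0)\le M_{2,i}\varepsilon/2$ hold, so Proposition \ref{lyapunovArgument} yields $(a^{n+1}_{1,0})^2+(a^{n+1}_{2,0})^2\le 4M_{1,1}\varepsilon$ (hence \eqref{anInductive}) and Proposition \ref{enhancedBBound} upgrades the intermediate modes to \eqref{bnInductive}; the $y$-direction is identical. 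For the remainder I would use the mild formulation $w^{n+1}(t)=e^{(-\Delta^2-\Delta)t}w(0)+I^+[(\partial_x\phi^n)^2+(\partial_y\phi^n)^2](t)$, bound the first term by $\|w(0)\|_{B^1_\rho}\le\tfrac{M_3}{6}\varepsilon^{3/2}$ via \eqref{eq:semigroupEst}, and bound the second by $K_1\|(\partial_x\phi^n)^2+(\partial_y\phi^n)^2\|_{\mathcal{B}^0_\rho}$. The decisive structural fact is that $\mathbb{P}_5$ kills the $O(\varepsilon)$ self-interactions $\Psi_1^{x},\Psi_1^{y}$ (supported only on wavenumbers in $A$), leaving the cross term $\Psi_4$, of size $a^n_{1,0}a^n_{2,0}\sim\varepsilon^{3/2}$, as the leading survivor; the algebra property and \eqref{secondK2} give $\|\Psi_4^{x}\|_{\mathcal{B}^0_\rho}\le 2M_{1,1}^{1/2}M_{2,1}K_2\,\varepsilon^{3/2}$, and the definition of $M_3$ gives $K_1\cdot 2M_{1,i}^{1/2}M_{2,i}K_2\le M_3/6$. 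Summing the two directions and absorbing the $O(\varepsilon^2)$ remainder, I obtain $\|w^{n+1}\|_{\mathcal{B}^1_\rho}\le \tfrac{M_3}{6}\varepsilon^{3/2}+\tfrac{M_3}{3}\varepsilon^{3/2}+(\text{higher order})\le M_3\varepsilon^{3/2}$, which is \eqref{wnInductive}, completing $P(n+1)$.

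I would take $\varepsilon_*$ to be the minimum of the thresholds supplied by Propositions \ref{lyapunovArgument} and \ref{enhancedBBound} together with the finitely many thresholds needed to absorb the higher-order terms above. The \emph{main obstacle} is the Fourier bookkeeping in the remainder estimate: one must verify exactly which modes each $\Psi_i$ occupies, confirm that $\mathbb{P}_5$ removes every $O(\varepsilon)$ contribution so that $\Psi_4$ really is the leading term, and check that the constants $M_3$ and $K$ (with their numerical factors) close the induction with room to spare. This annihilation is precisely the mechanism that keeps $w$ a half-power of $\varepsilon$ smaller than the growing modes, and is the crux of the whole scheme.
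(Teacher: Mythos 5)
Your proposal is correct and follows essentially the same path as the paper's proof: induction on $n$, with Propositions \ref{lyapunovArgument} and \ref{enhancedBBound} (applied with $Q_1=F^n_{1,0,x}+F^n_{1,0,y}$, $Q_2=F^n_{2,0,x}+F^n_{2,0,y}$) propagating the mode bounds, the mild formulation with the semigroup estimate \eqref{eq:semigroupEst}, the $K_1$ bound on $I^+$, and the annihilation $\mathbb{P}_5\Psi_1=0$ closing the $w$ bound, and the algebra property plus the constants $K_2$, $M_3$, $K$ yielding the forcing bounds. Your only deviations — initializing the scheme with the data instead of zero, treating the forcing bounds as a consequence of the level-$n$ amplitude bounds rather than carrying them in the inductive hypothesis, and regrouping the nonlinearity by $\Psi_i$ rather than the paper's $\Phi_0,\Phi_1,\Phi_2 w_x,(w_x)^2$ — are cosmetic and do not change the substance of the argument.
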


\begin{proof}
We initialize our iterative scheme with $a_{1,0}^{0}=a_{2,0}^{0}=0,$
$a_{0,1}^{0}=a_{0,2}^{0}(0)=0,$ and $w^{0}=0.$
The bounds \eqref{anInductive}, \eqref{bnInductive}, \eqref{wnInductive},
\eqref{firstFInductive}, \eqref{secondFInductive}, \eqref{thirdFInductive}, and \eqref{fourthFInductive}
are trivially satisfied by $a_{1,0}^{0},$ $a_{2,0}^{0},$ $a_{0,1}^{0},$ $a_{0,2}^{0},$ and $w^{0}.$  
We assume \eqref{anInductive}, \eqref{bnInductive},
\eqref{wnInductive}, \eqref{firstFInductive}, \eqref{secondFInductive}, \eqref{thirdFInductive}, 
and \eqref{fourthFInductive}, as our inductive hypothesis.
We now prove the analogues of these for the next iterate.  We recall the definition of $\phi^{n}$ from
\eqref{iteratedPhi}.

An appeal to Proposition \ref{lyapunovArgument} with $i=1$ 
immediately proves the desired bound on $a_{1,0}^{n+1},$ and another appeal to Proposition \ref{lyapunovArgument}
with $i=2$ immediately proves the desired bound on $a_{0,1}^{n+1}.$
Then appealing twice to Proposition \ref{enhancedBBound} 
again immediately proves the desired bounds on $a_{2,0}^{n+1}$ and $a_{0,2}^{n+1}.$

Next, we write the mild formulation of the equation for $w^{n+1}$ from \eqref{iteratedW}.  Since $\mathbb{P}_{5}$ is a projection, we
may
write $\mathbb{P}_{5}=\mathbb{P}_{5}^{2}.$  Using the definition of $I^{+}$ introduced in
Section \ref{duchonRobertSection}, we have 
\begin{equation}\nonumber
w^{n+1}=e^{(-\Delta^{2}-\Delta)t}w_{0}+I^{+}(\mathbb{P}_{5}((\phi_{x}^{n})^{2}+(\phi_{y}^{n})^{2})).
\end{equation}
Using the bounds developed in Section \ref{duchonRobertSection}, we can then estimate $w^{n+1}$ as follows:
\begin{equation}\label{aboutToBoundW}
\|w^{n+1}\|_{\mathcal{B}_{\rho}^{1}} \leq \|w_{0}\|_{B_{\rho}^{1}}
+K_{1}\left\|\mathbb{P}_{5}((\phi_{x}^{n})^{2})\right\|_{\mathcal{B}_{\rho}^{0}}
+K_{1}\left\|\mathbb{P}_{5}((\phi_{y}^{n})^{2})\right\|_{\mathcal{B}_{\rho}^{0}}.
\end{equation}

In order to close the induction argument,  we need to express $\phi_{x}^{n}$ and $\phi_{y}^{n}$ in terms of the quantities we are estimating. To this end,  we will use
a different decomposition for $(\phi_{x}^{n})^{2}$ and $(\phi_{y}^{n})^{2}$ than the one used in Section \ref{fSection}.
We decompose $(\phi_{x}^{n})^{2}$ in the following way:
\begin{equation}\label{newSquareDecomp}
(\phi_{x}^{n})^{2}=\Phi_{0}+\Phi_{1}+\Phi_{2}w_{x}^{n}+(w_{x}^{n})^{2},
\end{equation}
where $\Phi_{0},$ $\Phi_{1},$ and $\Phi_{2}$ are given by
\begin{equation}\nonumber
\Phi_{0}=\frac{4\pi^{2}(a_{1,0}^{n})^{2}}{L_{1}^{2}}\sin^{2}\left(\frac{2\pi x}{L_{1}}\right),
\end{equation}
\begin{equation}\nonumber
\Phi_{1}=\frac{16\pi^{2}a_{1,0}^{n}a_{2,0}^{n}}{L_{1}^{2}}
\sin\left(\frac{2\pi x}{L_{1}}\right)\sin\left(\frac{4\pi x}{L_{1}}\right)
+\frac{16\pi^{2}(a_{2,0}^{n})^{2}}{L_{1}^{2}}\sin^{2}\left(\frac{4\pi x}{L_{1}}\right),
\end{equation}
\begin{equation}\nonumber
\Phi_{2}=-\frac{4\pi a_{1,0}^{n}}{L_{1}}\sin\left(\frac{2\pi x}{L_{1}}\right)
-\frac{8\pi a_{2,0}^{n}}{L_{1}}\sin\left(\frac{4\pi x}{L_{1}}\right).
\end{equation}
One reason for this decomposition is that $\mathbb{P}_{5}\Phi_{0}=0.$  Another reason is that the term
$\Phi_{0}$ is larger than the remaining terms;  $\Phi_{1},$ $\Phi_{2}w^{n}_{x},$ and $(w_{x}^{n})^{2}$
are all of order $\varepsilon^{3/2}$ or smaller, while the same is not true for $\Phi_{0}.$

We now estimate $\left\|\mathbb{P}_{5}((\phi_{x}^{n})^{2})\right\|_{\mathcal{B}_{\rho}^{0}},$ using the fact that $\mathbb{P}_{5}\Phi_{0}=0$ and the fact $\mathbb{P}_{5}$ is bounded:
\begin{equation}\label{phixsquaredDecompositionBound}
\left\|\mathbb{P}_{5}((\phi_{x}^{n})^{2})\right\|_{\mathcal{B}_{\rho}^{0}}
\leq \|\Phi_{1}^{n}\|_{\mathcal{B}_{\rho}^{0}} + \|\Phi_{2}^{n}\|_{\mathcal{B}_{\rho}^{0}}\|w^{n}\|_{\mathcal{B}_{\rho}^{1}}
+\|w^{n}\|_{\mathcal{B}_{\rho}^{1}}^{2}.
\end{equation}
Above, we have also used the algebra property for $\mathcal{B}_{\rho}^{0}$ and that
$\|w_{x}^{n}\|_{\mathcal{B}_{\rho}^{0}}\leq \|w^{n}\|_{\mathcal{B}_{\rho}^{1}},$ which is a direct consequence of the definition.

We can then make some straightforward estimates of $\Phi_{1}$ and $\Phi_{2}.$  For $\Phi_{1}$ we have
\begin{equation}\nonumber
\|\Phi_{1}\|_{\mathcal{B}_{\rho}^{0}}
\leq 2M_{1,1}^{1/2}M_{2,1}K_{2}\varepsilon^{3/2}+M_{2,1}^{2}K_{2}\varepsilon^{2}.
\end{equation}
Of course, to get this bound, we have employed the inductive hypothesis and the fact  that $
\varepsilon_{1}\leq\varepsilon.$
We recall that $M_{3}\leq6K_{1}(2M_{1,1}^{1/2}M_{2,1}K_{2})$ to conclude that
\begin{equation}\nonumber
\|\Phi_{1}\|_{\mathcal{B}_{\rho}^{0}}\leq \frac{M_{3}}{6K_{1}}\varepsilon^{3/2}
+M_{2,1}^{2}K_{2}\varepsilon^{2}.
\end{equation}
We take $\varepsilon$ small enough so that
\begin{equation}\nonumber
M_{2,1}^{2}K_{2}\varepsilon^{2}\leq \frac{M_{3}}{6K_{1}}\varepsilon^{3/2}.
\end{equation}
We thus have
\begin{equation}\label{oneM3}
\|\Phi_{1}\|_{\mathcal{B}_{\rho}^{0}}\leq \frac{M_{3}}{3K_{1}}\varepsilon^{3/2}.
\end{equation}

We next turn to bounding $\Phi_{2}.$  By using again the inductive hypothesis and the definition of the constant $K_{2},$
it follows that
\begin{equation}\nonumber
\|\Phi_{2}\|_{\mathcal{B}_{\rho}^{0}}\leq 2M_{1,1}^{1/2}K_{2}\varepsilon^{1/2}
+M_{2,1}K_{2}\varepsilon.
\end{equation}
Another application of the inductive hypothesis gives that
\begin{equation}\nonumber
\|\Phi_{2}\|_{\mathcal{B}_{\rho}^{0}}\|w^{n}\|_{\mathcal{B}_{\rho}^{1}}
\leq 2M_{1,1}^{1/2}M_{3}K_{2}\varepsilon^{2}+M_{2,1}M_{3}K_{2}\varepsilon^{5/2}.
\end{equation}
We take $\varepsilon$ small enough so that
\begin{equation}\nonumber
2M_{1,1}^{1/2}M_{3}K_{2}\varepsilon^{2}+M_{2,1}M_{3}K_{2}\varepsilon^{5/2}
\leq\frac{M_{3}}{24K_{1}}\varepsilon^{3/2}.
\end{equation}
We then have
\begin{equation}\label{twoM3}
\|\Phi_{2}\|_{\mathcal{B}_{\rho}^{0}}\|w^{n}\|_{\mathcal{B}_{\rho}^{1}}\leq \frac{M_{3}}{24K_{1}}\varepsilon^{3/2}.
\end{equation}

We proceed in a similar fashion to bound the quadratic term:
\begin{equation}\nonumber
\|w^{n}\|_{\mathcal{B}_{\rho}^{1}}^{2}\leq M_{3}^{2}\varepsilon^{3}.
\end{equation}
We take $\varepsilon$ small enough so that
\begin{equation}\label{M3SquaredBound}
M_{3}^{2}\varepsilon^{3}\leq \frac{M_{3}}{24K_{1}}\varepsilon^{3/2},
\end{equation}
which implies
\begin{equation}\label{threeM3}
\|w^{n}\|_{\mathcal{B}_{\rho}^{1}}^{2}\leq \frac{M_{3}}{24K_{1}}\varepsilon^{3/2}.
\end{equation}

Having concluded our treatment of $(\phi_{x}^{n})^{2},$ we now consider $(\phi_{y}^{n})^{2}.$
We may treat $(\phi_{y}^{n})^{2}$ analogously to the way we treated $(\phi_{x}^{n})^{2}$ in 
\eqref{newSquareDecomp}, leading to the decomposition
\begin{equation}\nonumber
(\phi_{y}^{n})^{2}=\Phi_{3}+\Phi_{4}+\Phi_{5}w_{y}^{n}+(w_{y}^{n})^{2},
\end{equation}
with the formulas
\begin{equation}\nonumber
\Phi_{3}=\frac{4\pi^{2}(a_{0,1}^{n})^{2}}{L_{2}^{2}}\sin^{2}\left(\frac{2\pi y}{L_{2}}\right),
\end{equation}
\begin{equation}\nonumber
\Phi_{4}=\frac{16\pi^{2}a_{0,1}^{n}a_{0,2}^{n}}{L_{2}^{2}}
\sin\left(\frac{2\pi y}{L_{2}}\right)\sin\left(\frac{4\pi y}{L_{2}}\right)
+\frac{16\pi^{2}(a_{0,2}^{n})^{2}}{L_{2}^{2}}\sin^{2}\left(\frac{4\pi y}{L_{2}}\right),
\end{equation}
\begin{equation}\nonumber
\Phi_{5}=-\frac{4\pi a_{0,1}^{n}}{L_{2}}\sin\left(\frac{2\pi y}{L_{2}}\right)
-\frac{8\pi a_{0,2}^{n}}{L_{2}}\sin\left(\frac{4\pi y}{L_{2}}\right).
\end{equation}
Similarly to the case for $\Phi_{0},$ we have $\mathbb{P}_{5}\Phi_{3}=0.$
We may then bound $(\phi_{y}^{n})^{2}$ as
\begin{equation}\label{newM3Goal}
\|\mathbb{P}_{5}(\phi_{y}^{n})^{2}\|_{\mathcal{B}_{\rho}^{0}}\leq\|\Phi_{4}\|_{\mathcal{B}_{\rho}^{0}}
+\|\Phi_{5}\|_{\mathcal{B}_{\rho}^{0}}\|w^{n}\|_{\mathcal{B}_{\rho}^{1}}+\|w^{n}\|_{\mathcal{B}_{\rho}^{1}}^{2}.
\end{equation}
We next proceed to estimating the remaining terms in a manner analogous to the previous case, omitting details:
\begin{equation}\label{newM3-1}
\|\Phi_{4}\|_{\mathcal{B}_{\rho}^{0}}\leq \frac{M_{3}}{3K_{1}}\varepsilon^{3/2},
\end{equation}
\begin{equation}\label{newM3-2}
\|\Phi_{5}\|_{\mathcal{B}_{\rho}^{0}}\|w^{n}\|_{\mathcal{B}_{\rho}^{1}}\leq\frac{M_{3}}{24K_{1}}\varepsilon^{3/2},
\end{equation}
where $\varepsilon$ is chosen sufficiently small.

We recall the condition on the initial data for $w,$ namely,
\begin{equation}\label{fiveM3}
\|w_{0}\|_{B_{\rho}^{1}}\leq \frac{M_{3}}{6}\varepsilon^{3/2}.
\end{equation}
We then combine \eqref{aboutToBoundW}, \eqref{phixsquaredDecompositionBound}, \eqref{oneM3}, \eqref{twoM3}, \eqref{threeM3}, \eqref{newM3Goal},
\eqref{newM3-1}, \eqref{newM3-2}, 
and \eqref{fiveM3} to conclude
\begin{equation}
\|w^{n+1}\|_{\mathcal{B}_{\rho}^{1}}\leq M_{3}\varepsilon^{3/2}.
\end{equation}

It remains to demonstrate the estimates for the forcing terms. We include a proof only for
$F^{n+1}_{1,0,x}$, as the other cases are similar.

Recalling the formulas of Section \ref{fSection}, we have the decomposition for $F^{n+1}_{1,0,x}$ 
\begin{equation}\nonumber
F_{1,0,x}^{n+1}\cos\left(\frac{2\pi x}{L_{1}}\right) = \mathbb{P}_{1,0}\left[\Psi_{3}^{n+1}+\Psi_{5}^{n+1}+\Psi_{6}^{n+1}\right],
\end{equation}
where
\begin{equation}\nonumber
\Psi_{3}^{n+1}=(w_{x}^{n+1})^{2},
\end{equation}
\begin{equation}\nonumber
\Psi_{5}^{n+1}=-\frac{4\pi a_{1,0}^{n+1}}{L_{1}}w_{x}^{n+1}\sin\left(\frac{2\pi x}{L_{1}}\right),
\end{equation}
\begin{equation}\nonumber
\Psi_{6}^{n+1}=-\frac{8\pi a_{2,0}^{n+1}}{L_{1}}w_{x}^{n+1}\sin\left(\frac{4\pi x}{L_{1}}\right).
\end{equation}
We may then estimate $F^{n+1}_{1,0,x}$ as 
\begin{equation}\label{f10xBound}
|F_{1,0,x}^{n+1}|\leq\frac{\|\Psi_{3}^{n+1}\|_{\mathcal{B}_{\rho}^{0}}+\|\Psi_{5}^{n+1}\|_{\mathcal{B}_{\rho}^{0}}
+\|\Psi_{6}^{n+1}\|_{\mathcal{B}_{\rho}^{0}}}{\left\|\cos\left(\frac{2\pi x}{L_{1}}\right)\right\|_{\mathcal{B}_{\rho}^{0}}}.
\end{equation}
For $\Psi_{3}^{n+1}$ we have the estimate
\begin{equation}\label{psi3Bound}
\|\Psi_{3}^{n+1}\|_{\mathcal{B}_{\rho}^{0}}\leq\|w_{x}^{n+1}\|_{\mathcal{B}_{\rho}^{0}}^{2}
\leq \|w^{n+1}\|_{\mathcal{B}_{\rho}^{1}}^{2}\leq M_{3}^{2}\varepsilon^{3}.
\end{equation}
For $\Psi_{5}^{n+1}$ we use \eqref{secondK2}, as well as the inductive hypothesis, finding
\begin{equation}\label{psi5Bound}
\|\Psi_{5}^{n+1}\|_{\mathcal{B}_{\rho}^{0}}\leq 2M_{1,1}^{1/2}M_{3}K_{2}\varepsilon^{2}.
\end{equation}
The next term, $\Psi_{6}^{n+1},$ is similar, and we bound it as
\begin{equation}\label{psi6Bound}
\|\Psi_{6}^{n+1}\|_{\mathcal{B}_{\rho}^{0}}\leq M_{2,1}M_{3}K_{2}\varepsilon^{5/2}.
\end{equation}
From the definition of $K$ we have the bound
\begin{equation}\nonumber
\frac{3M_{1,1}^{1/2}M_{3}K_{2}}{\left\|\cos\left(\frac{2\pi x}{L_{1}}\right)\right\|_{\mathcal{B}_{\rho}^{0}}}\leq K.
\end{equation}
By exploiting this bound, combining \eqref{f10xBound}, \eqref{psi3Bound}, \eqref{psi5Bound}, and \eqref{psi6Bound}, and taking $\varepsilon$ sufficiently small, we have
\begin{equation}\nonumber
|F_{1,0,x}^{n+1}|\leq K\varepsilon^{2}.
\end{equation}

We omit the details of the bounds for $F^{n+1}_{2,0,x},$ $F^{n+1}_{1,0,y},$ $F^{n+1}_{2,0,y},$
$F^{n+1}_{0,1,x},$ $F^{n+1}_{0,1,y},$ $F^{n+1}_{0,2,x},$ and
$F^{n+1}_{0,2,y}.$  Therefore, this completes the proof.
\end{proof}

We now state our main theorem.

\begin{theorem}\label{mainTheoremTechnical}  Let $\bar{\psi}_{0}\in\mathbb{R}$ be given. Fix $0<T<\infty$.
Let $\varepsilon_{i}=-\left(\frac{2\pi}{L_{i}}\right)^{4}+\left(\frac{2\pi}{L_{i}}\right)^{2}$, and set
$\varepsilon=\max\{\varepsilon_{1}, \varepsilon_{2}\}.$
There exists $\varepsilon_{*}>0$ 
(dependent only on $\rho,$ $L_{1}$ and $L_{2}$ and not on $T$) 
such that if  $\varepsilon \in(0,\varepsilon_{*})$,
and   if
\begin{align}
 &(a_{1,0}(0))^{2}+(a_{2,0}(0))^{2}\leq M_{1,1}\varepsilon/4,\nonumber \\
 &|a_{2,0}(0)|\leq M_{2,1}\varepsilon/2, \nonumber \\
 &(a_{0,1}(0))^{2}+(a_{0,2}(0))^{2}\leq M_{1,2}\varepsilon/4, \nonumber \\
 &|a_{0,2}(0)|\leq M_{2,2}\varepsilon/2, \nonumber \\
 &\|w(0)\|_{\mathcal{B}_{\rho}^{1}}\leq M_{3}\varepsilon^{3/2}/6, \nonumber
 \end{align}
 then the  Kuramoto-Sivashinsky equation \eqref{KS} on the torus $\mathbb{T}^2=[0,L_1]\times [0,L_2]$ with initial data
\begin{multline}\nonumber
\psi(x,y,0)=\bar{\psi}_{0}+a_{1,0}(0)\cos\left(\frac{2\pi x}{L_{1}}\right)
+a_{2,0}(0)\cos\left(\frac{4\pi x}{L_{1}}\right)
\\
+a_{0,1}(0)\cos\left(\frac{2\pi y}{L_{2}}\right)+a_{0,2}(0)\cos\left(\frac{4\pi y}{L_{2}}\right)+w_{0}(x,y)
\end{multline}
has a mild solution  that is analytic in space on $[0,T]$.
\end{theorem}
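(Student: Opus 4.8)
The plan is to combine the uniform-in-$n$ bounds of Proposition \ref{uniformEstimate} with a compactness argument to pass to the limit in the iterative scheme, to verify that this limit is a mild solution of the coupled system, and finally to reconstruct $\psi$ from $\phi$ and its mean. Fix $T\in(0,\infty)$ and take $\varepsilon<\varepsilon_{*}$ with $\varepsilon_{*}$ small enough that every smallness condition appearing in Propositions \ref{lyapunovArgument}, \ref{enhancedBBound}, and \ref{uniformEstimate} holds; crucially these thresholds depend only on $\rho$, $L_{1}$, $L_{2}$, and not on $T$ or $n$. Proposition \ref{uniformEstimate} then supplies bounds on $a_{1,0}^{n}$, $a_{2,0}^{n}$, $a_{0,1}^{n}$, $a_{0,2}^{n}$, and $w^{n}$ that are uniform in both $n$ and $t\in[0,T]$, with $w^{n}$ bounded in $\mathcal{B}_{\rho}^{1}$ by $M_{3}\varepsilon^{3/2}$.

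First I would extract a convergent subsequence. The four scalar sequences $\{a_{1,0}^{n}\}$, $\{a_{2,0}^{n}\}$, $\{a_{0,1}^{n}\}$, $\{a_{0,2}^{n}\}$ are uniformly bounded on $[0,T]$, and reading off their defining ODEs shows their time derivatives are uniformly bounded as well, since every term on the right-hand side is controlled by the inductive bounds. Hence these sequences are equi-Lipschitz, and the Arzel\`a--Ascoli theorem produces uniformly convergent subsequences. For the remainder I would argue coefficient by coefficient: for each fixed $(k,j)\notin A$, the Fourier coefficient $w_{k,j}^{n}$ solves the scalar mild equation obtained by projecting \eqref{iteratedW} onto the $(k,j)$ mode, so that both $\sup_{t}|w_{k,j}^{n}|$ and $\sup_{t}|\tfrac{d}{dt}w_{k,j}^{n}|$ are bounded uniformly in $n$, the forcing coefficient being controlled because $(\phi_{x}^{n})^{2}+(\phi_{y}^{n})^{2}$ is uniformly bounded in the Banach algebra $\mathcal{B}_{\rho}^{0}$. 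A second application of Arzel\`a--Ascoli together with a diagonal argument over the countable index set yields a subsequence along which $w_{k,j}^{n}\to w_{k,j}$ uniformly on $[0,T]$ for every $(k,j)$. Since the iterates carry $n$-independent initial data, the limits inherit the prescribed initial values.

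Next I would identify the limit and pass to the limit in the equations. A Fatou-type argument, summing over finite sets of wavenumbers and using $\sup_{t}|w_{k,j}^{n}|\to\sup_{t}|w_{k,j}|$, shows that the limit satisfies $\|w\|_{\mathcal{B}_{\rho}^{1}}\leq M_{3}\varepsilon^{3/2}$, so $w$, and hence $\phi$, is analytic in space with radius of analyticity at least $\rho$. The main obstacle is upgrading coefficientwise convergence to convergence strong enough to pass to the limit inside the quadratic nonlinearity. Here I would exploit the uniform $\mathcal{B}_{\rho}^{1}$ bound: for any $\rho'<\rho$ the weight ratio $e^{-(\rho-\rho')(|k|+|j|)}/(1+|k|+|j|)$ forces the tails of the $\mathcal{B}_{\rho'}^{0}$ norm to be uniformly small, so uniform convergence of the finitely many low-wavenumber coefficients combines with uniform tail smallness to give $w^{n}\to w$ in the algebra $\mathcal{B}_{\rho'}^{0}$. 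Because $\mathcal{B}_{\rho'}^{0}$ is a Banach algebra and the $a^{n}$ converge uniformly, the products $(\phi_{x}^{n})^{2}+(\phi_{y}^{n})^{2}$ and the forcing functions $F^{n}$ converge in the relevant norms. Passing to the limit in the four ODEs and in the mild formulation $w^{n+1}=e^{(-\Delta^{2}-\Delta)t}w_{0}+I^{+}(\mathbb{P}_{5}((\phi_{x}^{n})^{2}+(\phi_{y}^{n})^{2}))$, with $I^{+}$ bounded from $\mathcal{B}_{\rho}^{0}$ to $\mathcal{B}_{\rho}^{1}$ by Section \ref{duchonRobertSection}, shows that the limit solves the coupled system, hence that $\phi$ is a mild solution of \eqref{eq:phiEq}.

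Finally I would reconstruct the full solution. Since $\phi\in\mathcal{B}_{\rho}^{1}\subset L^{\infty}([0,T];\dot H^{1})\subset L^{1}([0,T];\dot H^{1})$, the mean equation \eqref{meanEquation} defines a finite, absolutely continuous $\bar\psi(t)$ with $\bar\psi(0)=\bar\psi_{0}$, and $\psi=\bar\psi+\phi$ is then the desired mild solution of \eqref{KS} on $[0,T]$, analytic in space with radius at least $\rho$. Because $\varepsilon_{*}$ and all the constants are independent of $T$, the same construction applies on every finite time interval, which gives the stated global conclusion.
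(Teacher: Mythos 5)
Your proof is correct, and its skeleton is the same as the paper's: uniform-in-$n$ bounds from Proposition \ref{uniformEstimate}, a compactness argument to extract a limit of the iterates $\phi^{n}$, passage to the limit in the mild formulation to see that $\phi$ solves \eqref{eq:phiEq}, and reconstruction of $\psi=\bar\psi+\phi$ by integrating the mean equation \eqref{meanEquation}. Where you genuinely differ is in how the compactness and limit passage are implemented, and your version is both more elementary and more complete. The paper invokes Montel's theorem on the family $\{\phi^{n}\}$, viewed as uniformly bounded spatially analytic functions, and then simply asserts that the resulting regularity is ``enough to pass to the limit in the mild formulation.'' You instead work entirely on the Fourier side: Arzel\`a--Ascoli for the four ODE modes (equi-Lipschitz because the right-hand sides are uniformly bounded by the inductive estimates), coefficientwise Arzel\`a--Ascoli plus a diagonal argument for $w^{n}$, a Fatou argument giving $\|w\|_{\mathcal{B}_{\rho}^{1}}\leq M_{3}\varepsilon^{3/2}$ for the limit, and---the key extra step---the radius-shrinking estimate showing $w^{n}\to w$ in the algebra $\mathcal{B}_{\rho'}^{0}$ for any $\rho'<\rho$, because the uniform $\mathcal{B}_{\rho}^{1}$ bound makes the weighted Fourier tails uniformly small. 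That last step is precisely what the paper leaves implicit: spatial compactness alone does not give equicontinuity in time, and coefficientwise convergence alone does not allow one to pass to the limit inside the quadratic nonlinearity, whereas your algebra-norm convergence (combined with boundedness of $I^{+}$ from Section \ref{duchonRobertSection}) does both jobs, at no cost in the conclusion since the Fatou bound still yields radius of analyticity at least $\rho$ for the limit itself. A minor cosmetic point: the paper's proof cites the condition $\phi\in L^{2}([0,T];\dot H^{1})$ while the introduction requires only $L^{1}([0,T];\dot H^{1})$; your chain $\mathcal{B}_{\rho}^{1}\subset L^{\infty}([0,T];\dot H^{1})$ on the finite interval $[0,T]$ covers either requirement, so the reconstruction of $\bar\psi$ goes through exactly as you say.
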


\begin{proof}
By Proposition \ref{uniformEstimate}, the family $\{\phi^{n}\}_{n\in \mathbb{N}}$, where $\phi^n$ is given in \eqref{iteratedPhi}, is  a uniformly bounded family of functions analytic in space, with radius of
analyticity $\rho$ independent of $t$, and continuous and bounded in $t\in [0,\infty)$. Upon passing to a subsequence if necessary, not relabeled,  we  may thus  find a limit as $n\rightarrow\infty$ that is analytic in space by Montel's Theorem, continuous and bounded in time.
This is enough regularity to pass to the limit in the mild formulation of the evolution equations.
Thus the limit of the iterates, $\phi,$ exists and solves \eqref{eq:phiEq} on $[0,\infty)$.

We next turn to show the existence of $\psi$ solving the 2D KSE \eqref{KS} on $[0,T]$ for an arbitrary $0<T<\infty$.
We may write $\psi=\phi +\bar{\psi}$, where  $\bar{\psi}$ solves \eqref{meanEquation} with initial condition $\bar{\psi}_{0}.$ As noted in the introduction, the initial value problem for $\bar{\psi}$ can be solved on the time interval $[0,T]$ as long as $\phi\in L^{2}([0,T];\dot{H}^{1})$, which is the case given the regularity established above on $\phi$.
\end{proof}

Notice that we state this theorem on the interval $[0,T]$ for arbitrary $T,$ rather than using the interval $[0,\infty).$
This is because we do not know  that the mean, $\bar{\psi},$ 
has a well-defined limit as $t\to\infty$ since the mild formulation for $\phi$ does not imply that 
$\phi\in L^2([0,\infty);\dot{H}^1)$.  Nevertheless we have achieved the stated goal, showing that the 
two-dimensional Kuramoto-Sivashinsky equation has small solutions for all time in the presence of two linearly 
growing Fourier modes, one in each direction.

\begin{remark}
\begin{enumerate}

\item Our goal in this work is to establish global existence, therefore we do not treat in detail the uniqueness of the solution. However, uniqueness in $C([0,T],L^2)$ for any $0<T<\infty$ follows in a manner similar to that for viscous Hamilton-Jacobi equations and other semilinear parabolic equations (see for instance \cite[Proposition 1.1, page315]{TaylorPDEIII}).

\item As we have stated, our prior work \cite{ambroseMazzucato} proved global existence when
$L_{1}$ and $L_{2}$ are each in the interval $(0,2\pi),$ while here we have shown global existence when
they are each slightly larger than $2\pi.$  The present results do also extend to the case when only one of
these lengths is slightly larger than $2\pi,$ and the other is smaller. We also expect that arguments similar to those in this work and in \cite{ambroseMazzucato} will give the existence of a mild solution with initial data in $L^2$ under hypotheses akin to those in Theorem \ref{mainTheoremTechnical}. For brevity and clarity, we chose not to pursue this result here.

\item 
In \cite{ambroseMazzucato}, we proved two analyticity results for \eqref{KS} in the absence of linearly growing modes.
One of these results is that solutions of \eqref{KS} with data in $H^1$ are analytic for $t>0$ and the radius of analyticity can grow polynomially at first and then decay exponentially.   The other is that with small data in the Wiener algebra, 
the radius of analyticity grows at least linearly in time.  By contrast, in the present theorem, we have taken 
data in the spaces $B^m_\rho$ with $\rho>0;$ such data is analytic with radius of analyticity at least $\rho.$ The proof of 
Theorem \ref{mainTheoremTechnical} gives that the radius of analyticity of the remainder term 
remains analytic with radius of analyticity at least $\rho.$  
Regularity of local-in-time solutions for \eqref{KS} in Gevrey classes was studied in \cite{BiswasSwanson07}.
\end{enumerate}
\end{remark}

\section{Conclusion}\label{conclusion} 
We have shown the existence of small solutions of the
Kuramoto-Sivashinsky equation in two space dimensions for all time, when the size of the domain admits a linearly growing mode in each direction. To our knowledge, this is the first result of this kind.
The method of proof is new, in combining a dynamical systems approach for a finite number of modes with
function space estimates for the remaining infinitely many modes.  This approach raises the possibility that significant
further progress could be possible, extending beyond the present case of a pair of slightly growing modes,
by designing different Lyapunov functions or making different choices of function spaces.  
Another possible area of extension is extracting more detailed information about the solutions; while we showed that
$w\sim\varepsilon^{3/2},$ a finer description of amplitudes could be made for the modes encompassed by $w.$
The method of the present work could also be extended to other systems, including more fundamental systems
in flame propagation.  That is, the Kuramoto-Sivashinsky equation is a weakly nonlinear model, and can be 
proved to be a valid approximation for coordinate-free models; global existence of solutions for 
these coordinate-free and other models is of interest (see \cite{akersAmbroseFlame},
\cite{ambroseHadadifardWright}, \cite{frankelSivashinsky1987}).

\bibliography{2dksGrowingMode.bib}{}
\bibliographystyle{plain}

\end{document}